\newtheorem{theorem}{Theorem}
\newtheorem{corollary}{Corollary}
\newtheorem{counter-example}{Counter Example}
\newtheorem{lemma}{Lemma}
\def \l {\left}
\def \r {\right}
\def \diag {\operatorname{diag}}
\def \dim {\operatorname{dim}}
\def \trace {\operatorname{tr}} 
\def \I {\operatorname{I}} 
\newcommand{\range}{\mathrm{range}}
\newcommand{\Stiefel}{\mathrm{St}}
\newcommand{\Grassman}{\mathrm{Gr}}
\newcommand{\Orth}{\mathrm{Orth}}
\newcommand{\GL}{\mathrm{GL}}
\newcommand{\Sym}{\mathrm{Sym}}
\newcommand{\comment}[1]{}
\newcommand{\RN}{\ensuremath{\mathbb{R}}}
\newcommand{\edit}[1]{{{#1}}}
\begin{document}

\title{PCA by Optimisation of Symmetric Functions\\ has no Spurious Local Optima}
\author{Armin Eftekhari and Raphael A.\ Hauser\thanks{Authors are ordered in the  alphabetical order. AE is with the Institute of Electrical Engineering at the \'{E}cole Polytechnique F\'{e}d\'{e}rale de Lausanne. RAH is with the Mathematical Institute at the University of Oxford and the Alan Turing Institute in London.  E-mails: \texttt{armin.eftekhari@epfl.ch} and \texttt{hauser@maths.ox.ac.uk}. The results in Section \ref{sec:pca by det} of this paper were previously presented at the International Conference on  Knowledge Discovery and Data 2018, but are included here as a special case to motivate and build intuition for the far more general results presented in Section \ref{sec:general conic}. The proof of the results in Section \ref{sec:pca by det} will also prepare the reader for the  ensuing more general  arguments.  }
}
\maketitle

\begin{abstract}

{Principal Component Analysis (PCA) finds the best linear representation of data, and is an indispensable tool in many learning and inference tasks. Classically, principal components of a dataset are interpreted as the directions that preserve most of its ``energy'', an interpretation that is theoretically underpinned by the celebrated Eckart-Young-Mirsky Theorem.}

This paper introduces many other  ways of performing PCA, with various geometric interpretations, and proves that the corresponding family of non-convex programs have no spurious local optima, \edit{while possessing only strict saddle points. These programs therefore loosely behave like convex problems and can be efficiently solved to global optimality, for example, with certain variants of the stochastic gradient descent.}

Beyond providing new geometric interpretations and enhancing our theoretical understanding of PCA, our findings might pave the way for entirely new approaches to structured dimensionality reduction, such as sparse PCA and nonnegative matrix factorisation. More specifically, we study an unconstrained formulation of PCA using determinant optimisation that might provide an elegant alternative to the  {deflating} scheme commonly used in sparse PCA. 
\end{abstract}

\section{Introduction}\label{introduction}

Let $A\in\mathbb{R}^{m\times n}$ be a data matrix, with  rows corresponding  to $m$ different data vectors, and columns corresponding to $n$ different features. Successful dimensionality reduction is at the heart of classification, regression, and other learning tasks that often suffer from the ``curse of dimensionality'', where having a small number of training samples in relation to the data dimension (namely, $m\ll n$) typically leads to overfitting~\cite{hastie2013elements}.

To reduce the dimension of data from $n$ to $p\le n$, consider \edit{a matrix} $X$ with orthonormal columns. 
Then the rows of $AX\in\mathbb{R}^{m\times p}$ correspond to the data vectors \edit{(namely, the rows of $A$)} projected onto  the column span  of $X$, \edit{which we denote by} $\range(X)$.  In particular, the new data matrix $AX$ has reduced dimension $p$, while the number $m$ of projected data vectors is unchanged, see Figure \ref{fig:Vis0}. 
 Principal Component Analysis~(PCA) is one of the oldest dimensionality 
reduction techniques that can be traced back to the work of Pearson \cite{pearson} and Hotelling \cite{hotelling}, motivated by the observation that often data lives near a lower-dimensional subspace of $\RN^n$, see Figure \ref{fig:Vis1}. PCA identifies this subspace by finding a suitable matrix $X$ that retains in $AX$ as much as possible of the energy of $A$, and 
 the optimal $X$ is called the \emph{loading matrix}. 
The columns of the loading matrix also reveal the hidden correlations between different features by identifying groups of variables that occur with jointly positive or jointly negative weights, for example in gene expression data \cite{alter4}.

\begin{center}
\begin{figure}[H]
\begin{center}
\includegraphics[width=0.8\textwidth]{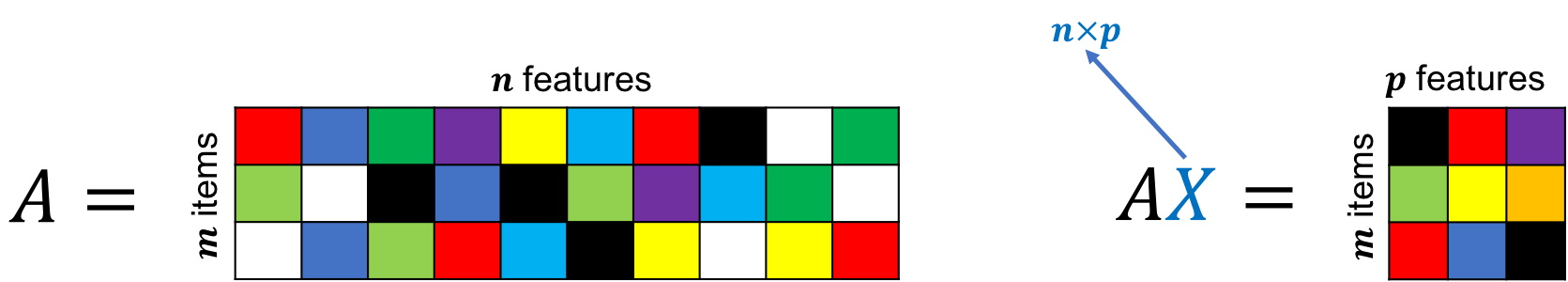}
\caption{This figure illustrates the simple and powerful concept of linear dimensionality reduction. The left panel shows a data matrix $A$, with rows corresponding to $m$ different data vectors and columns corresponding to $n$ different features. For a matrix $X\in\RN^{p\times r}$, the right panel shows the projected data matrix $AX$, containing  again $m$ data vectors (rows) but with only $p\le n$ features (columns). 
\label{fig:Vis0}  }
\end{center}
\end{figure}
\end{center}

\begin{center}
\begin{figure}[H]
\begin{center}
\includegraphics[width=0.4\textwidth]{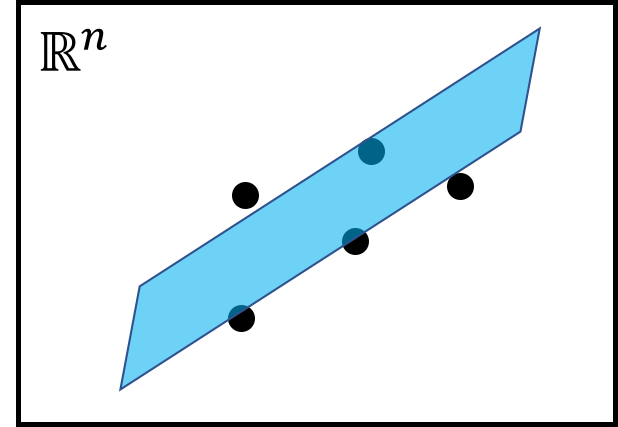}

\caption{
With each dot corresponding to a data vector with $n$ features, PCA finds a linear subspace (in blue) that best represents the data vectors by capturing  most of the energy of the dataset. 
\label{fig:Vis1}  }
\end{center}
\end{figure}
\end{center}

PCA is also the building block of other dimensionality reduction techniques such as sparse PCA \cite{journee2010generalized}, kernel PCA \cite{scholkopf2002learning,shawe2004kernel}, multi-dimensional scaling \cite{borg2013modern}, and \edit{Nonnegative Matrix Factorisation} (NMF) \cite{gillis2014and}. For  example, sparse PCA aims to find the important features of data by requiring the loading matrix to be sparse, namely, to have very few nonzero entries. Sparse PCA is useful for instance in studying gene expression data, where we are interested in singling out a small number of genes that are responsible for a certain trait or disease~\cite{alter2000singular}.  NMF, on the other hand, requires both $AX$ and $X$ to have nonnegative entries, which is valuable in recommender systems for instance  where the data matrix $A$ containing, say, film ratings is nonnegative and one would expect the same from the projected data matrix $AX$.

More formally, assume throughout this paper that  the data matrix $A\in\RN^{m\times n}$ is mean-centred. \edit{That is,} $\sum_{i=1}^m a_i = 0$, where $a_i\in\mathbb{R}^n$ is the $i$-th row of $A$, namely, the $i$-th data vector. For $p\le n$, let $\mathbb{R}^{n\times p}_p$ be the space of full-rank $n\times p$ matrices and consider the {\em trace inflation function} 
\begin{eqnarray}
f_{\trace}:\RN^{n\times p}_p&\rightarrow&\RN\nonumber\\
X&\mapsto&
\frac{\| AX\|_F^2}{\|X\|_F^2} = 
\frac{\trace(X^*A^*AX)}{\trace(X^*X)},\label{gtrace}
\end{eqnarray}
and the program
\begin{equation}\label{pca by trace}
\arg\max\left\{f_{\trace}(X):\;X\in\Stiefel(n,p)\right\}.
\end{equation}
Above, $\|\cdot\|_F$ and $\mbox{tr}(\cdot )$ return the Frobenius norm and  trace of a matrix, respectively, and $A^*$ is the transpose of matrix $A$. 
With $p\le n$,  $\Stiefel(n,p)$ above denotes the the Stiefel manifold,  the set of all $n\times p$ matrices with orthonormal columns.   Note that when $X\in \Stiefel(n,p)$, the denominator in the definition of $f_{\trace}(X)$ in \eqref{gtrace} is constant, but we include this term to highlight the structural similarity with other inflation functions that we will study later.

It is a consequence of the celebrated Eckart-Young-Mirsky (EYM) Theorem  that a Stiefel matrix $X\in\Stiefel(n,p)$ is a global maximiser of Program \eqref{pca by trace} if and only if it consists of  $p$ leading right singular vectors 
of $A$, namely, the right singular vectors of $A$ corresponding to its $p$ largest singular values \cite{eckart, mirsky}. In other words, Program~\eqref{pca by trace} performs PCA on the data matrix $A$:  The loading matrix, namely, global maximiser of Program~\eqref{pca by trace},  is a $p$-leading right singular factor $V_p\in\RN^{n\times p}$ of $A$, and the projected data matrix $AV_p\in\RN^{n\times p}$ contains the first $p$ principal components of  $A$. 

Note also that Program \eqref{pca by trace} is non-convex because $\Stiefel(n,p)\subset \RN^{n\times p}$ is a non-convex set. Even though non-convex, Program~\eqref{pca by trace} behaves like a convex problem in the sense that any local maximiser of Program~\eqref{pca by trace} is also a global maximiser. Indeed, it is also a consequence of the EYM Theorem   that Program~\eqref{pca by trace} does not have any spurious local maximisers. \edit{Moreover, all saddle points of this program are \emph{strict}, namely, have an ascent direction.}
Therefore the non-convex  Program~\eqref{pca by trace} can be \edit{efficiently solved (to global optimality) using (certain variants of) the stochastic gradient descent, see for instance~\cite{jin2017escape,mokhtari2018escaping,sun2015nonconvex}.}  \edit{Even more efficiently, solving Program~\eqref{pca by trace} or equivalently} computing the loading matrix and the principal components  of $A$ can be done in $O( \max(m,n) p^2)$ operations using fast algorithms for Singular Value Decomposition~(SVD)\edit{, see for example Algorithm 8.6.1 in \cite{golub1996matrix}.} 

\paragraph{Motivation.}
Our motivation for this work was the following simple observation. The interpretation of PCA as a dimensionality reduction \edit{tool} suggests that it should suffice to  find a matrix 
$X\in\RN^{n\times p}_p$ whose columns span the optimal subspace, \edit{which corresponds to} $p$ leading right singular vectors of $A$. That is, one would expect $f_{\trace}$ in Program \eqref{pca by trace} to be a function on the Grassmannian $\Grassman(n,p)$, the set of all $p$-dimensional subspaces of $\RN^n$. In other words, one would like $f_{\trace}$ to be invariant under an arbitrary change of basis in its argument. 

That is of course not the case, \edit{as a quick inspection of \eqref{gtrace} reveals.}  Generally, \edit{we have} $f_{\trace}(X\Theta)=f_{\trace}(X)$, only when $\Theta\in\Orth(p) $, namely, when $\Theta\in\RN^{p\times p}$ \edit{itself} is an orthonormal matrix. Program \eqref{pca by trace} is thus inherently constrained to work with Stiefel matrices, a requirement that is not particularly onerous in the case of PCA but becomes a conceptual nuisance when considering structured dimensionality reduction, such as sparse PCA or NMF. Indeed, enforcing sparsity or nonnegativity in the  columns of $X$ in conjunction with orthogonality for the columns of $X$ tends to be very restrictive and is perhaps a questionable objective \edit{in the first place}. 

\paragraph{Contributions.} 
Motivated by the above observation, this paper introduces many other ways of performing PCA, with various geometric interpretations, and proves that the corresponding family of non-convex programs have no spurious local optima, \edit{while possessing only strict saddle points}. These new programs therefore \edit{loosely} behave like convex problems \edit{and can be solved to global optimality in polynomial time with, for example, the variants of stochastic gradient ascent in~\cite{jin2017escape,mokhtari2018escaping}.} More specifically, replacing $\trace$ in $f_{\trace}$ with any elementary symmetric polynomial yields an equivalent formulation for PCA, see the family of problems in \eqref{fq} and  the \edit{even} larger family of problems in \eqref{conic}.

Program \eqref{pca by trace} \edit{above} is indeed a member of this large family. Another notable member of this family is Program~\eqref{pca by det} below, \edit{which} is effectively unconstrained, and \edit{consequently} does \emph{not} require $X$ to have orthonormal columns. This observation is of particular importance in practice. As we show in Section \ref{structured}, this unconstrained formulation of PCA in Program \eqref{pca by det} \edit{potentially} allows for an elegant approach to structured PCA, in which we wish to impose additional structure on the loading matrix, such as sparsity or nonnegativity.

Let us add that it is known already that Program \eqref{pca by det} is equivalent to PCA \cite{horn1990matrix}, \edit{see \cite{national2002nist} for an application to {optimal design}  and \cite{hyvarinen2004independent} for an example in the context of {independent component analysis}}. 
\edit{Of course, this equivalence does not guarantee that Program~\eqref{pca by det}, like Program~\eqref{pca by trace}, can also be solved in polynomial time.} \edit{In this sense, our contribution} is that the non-convex Program \eqref{pca by det} has no spurious local optima, \edit{has only strict saddle points,} and  can therefore be solved efficiently by certain variants of the \edit{stochastic} gradient descent. Moreover, the introduction of the rest of this large family of equivalent formulations of PCA and their analysis in this work is \edit{the other novel aspect of this work}. 

\paragraph{Organisation.}
The rest of this paper is organised as follows. To present this work in an increasing order of complexity, we first introduce in Section \ref{sec:pca by det} the unconstrained formulation of PCA, namely, Program \eqref{pca by det}, and discuss in Section \ref{structured} its potential application in structured dimensionality reduction. In Section \ref{sec:general conic}, we then present Programs~(\ref{fq},\ref{conic}), a large family of equivalent formulations of PCA, of which both Programs~(\ref{pca by trace},\ref{pca by det}) are members. The claim that  all these programs are indeed equivalent to PCA and \edit{can be efficiently solved} is proven in Sections \ref{details}, \ref{sec:proof of general optimality}, and the appendices.  

\section{PCA by Determinant Optimisation \label{sec:pca by det}}

In analogy to $f_{\trace}$ in \eqref{gtrace}, let us  define the {\em volume inflation function} by 
\begin{eqnarray}
f_{\det}:\RN^{n\times p}_p&\rightarrow&\RN\nonumber\\
X&\mapsto& \frac{\det(X^*A^*AX)}{\det(X^*X)},\label{gdet}
\end{eqnarray}
where $\det$ stands for determinant and, in analogy to Program \eqref{pca by trace}, consider  the program 
\begin{equation}\label{pca by det pre}
\arg\max\left\{f_{\det}(X):\;X\in\Stiefel(n,p)\right\}.
\end{equation}
Observe that Programs \eqref{pca by trace} and \eqref{pca by det pre}  coincide for $p=1$, namely, when we seek the leading principal component of the matrix $A$, in which case $X^*A^*AX$ and $X^*X$ are both positive scalars.  
Unlike  $f_{\trace}$,  note that $f_{\det}$ is  invariant under an arbitrary change of basis. Indeed, for arbitrary 
$X\in\RN^{n\times p}_p$ and $\Theta\in\GL(p)$, we have that  
\begin{align}
f_{\det}(X\Theta)& =  \frac{\det(\Theta)^2\det(X^*A^*AX)}{\det(\Theta)^2\det(X^*X)} \nonumber\\
& =f_{\det}(X),
\label{invariance of det}
\end{align}
where $\GL(p)$ is the general linear group, the set of all invertible $p\times p$ matrices. 
That is, $f_{\det}$ is naturally defined on the Grassmannian $\Grassman(n,p)$ and consequently Program \eqref{pca by det pre} is equivalent to the program 
\begin{equation}\label{pca by det}
\arg\max\left\{f_{\det}(X):\;X\in\RN^{n\times p}_p\right\}.
\end{equation}
Because $f_{\det}$ is invariant under any change of basis by \eqref{invariance of det}, Program \eqref{pca by det} inherently constitutes an optimization over the Grassmannian $\Grassman(n,p)$. 
Moreover, it is important that Program \eqref{pca by det} is effectively unconstrained because $\RN^{n\times p}_p$ is an open subset of $\RN^{n\times p}$ with nonempty  interior. To summarise, the drawback of Program~\eqref{pca by trace} \edit{in Section \ref{introduction} which served as the motivation of this work}  is overcome by Program \eqref{pca by det}, because it is an unconstrained optimisation program that involves an objective function defined naturally on the Grassmannian. 

A key observation of this paper is that Program \eqref{pca by det} appears to be a good model for dimensionality reduction. Indeed, note that $X^*A^*AX\in\RN^{p\times p}$ is the sample covariance \edit{matrix} of the projected data $AX$. Consider the normal distribution $\mathcal{N}(0,X^*A^*AX)$ with zero mean and covariance matrix $X^*A^*AX$, which has ellipsoidal level sets of the form 
\begin{equation}
\{z\in \mathbb{R}^p\,:\, z^* X^* A^* A X z = c\},
\label{eq:bounding box defined}
\end{equation}
for arbitrary $c\ge 0$. Let $B_c$ be the bounding box of this level set and note that the volume of $B_c$ is $c^p \sqrt{\det(X^*A^*AX)}$. We can therefore interpret Program \eqref{pca by det} as maximising  the volume of this bounding box. \edit{That is}, Program \eqref{pca by det} \edit{loosely-speaking} finds the directions that \edit{maximise the volume of the projected dataset.}

In contrast, Program \eqref{pca by trace} maximises the energy of the projected data. That is, Program \eqref{pca by trace} maximises the diameter of the  above bounding box, namely,  $c \sqrt{\mbox{tr}(X^*A^*AX)}$, rather than its volume, see Figure \ref{fig:Vis2}. It is perhaps peculiar that $\mbox{tr}(X^*A^*AX)$ is commonly referred to as the ``total variance'' of the dataset, for this quantity does not play any role in the normalising constant of the normal distribution $\mathcal{N}(0,X^*A^*AX)$, whereas $\det(X^*A^*AX)$ does, in direct generalization of the role the variance plays in the one-dimensional case. 
\begin{center}
\begin{figure}[h]
\begin{center}
\includegraphics[width=0.7\textwidth]{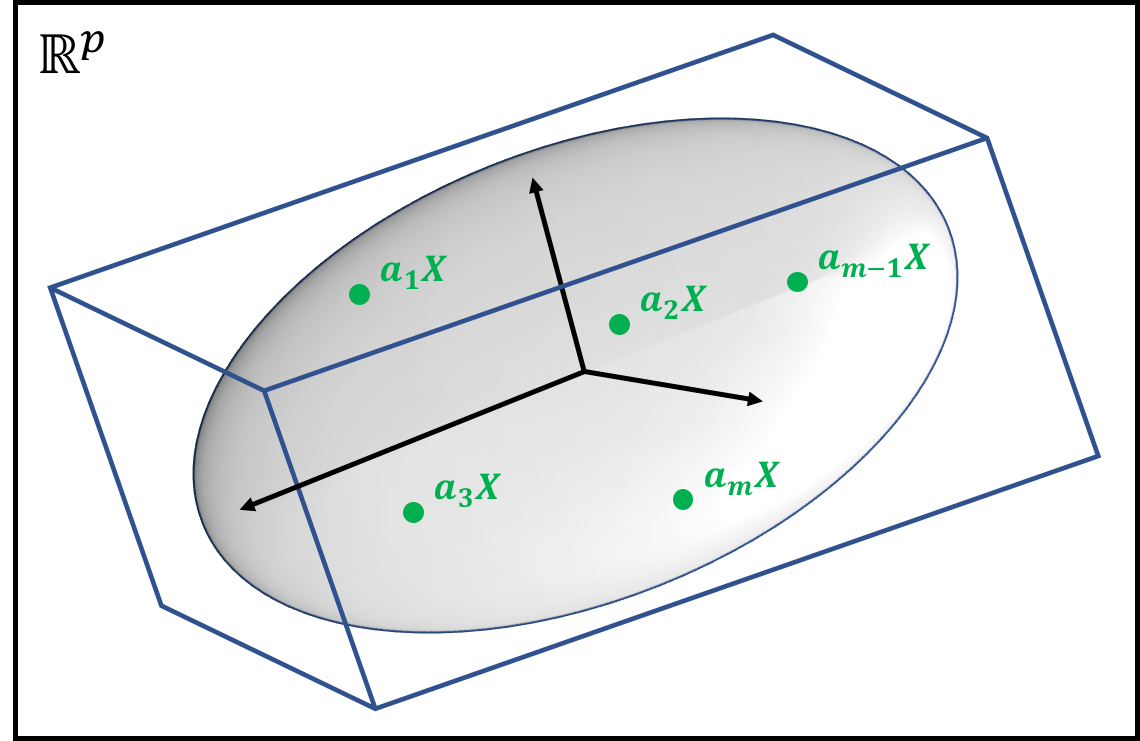}
\caption{
This figure illustrates the geometric intuition underlying this paper. Suppose that $a_1,\cdots,a_m\in \RN^n$ are the rows of the data matrix $A\in \RN^{m\times n}$, each representing a data vector. Then $a_1 X,\cdots ,a_m X \in \RN^p$ are the projected data vectors, with reduced dimension of $p\le n$. It is easy to see that the sample covariance matrix of these projected data vectors  is $X^*A^* AX\in \RN^{p\times p}$, with ellipsoidal level sets, one of which and its bounding box is displayed above.  Then Program \eqref{pca by trace} maximises the diameter of this bounding box, which is proportional to $\sqrt{\trace(X^* A^* AX)}$. In contrast, Program \eqref{pca by det} maximises the volume of this box, which is proportional to $\sqrt{\det(X^* A^* AX)}$. Remarkably, both of Programs (\ref{pca by trace},\ref{pca by det}) perform PCA of data matrix $A$, see Sections \ref{introduction} and \ref{sec:pca by det}. As discussed in Section \ref{structured}, Program \eqref{pca by det} is of particular importance in practice as it gives an elegant solution to the problem of structured linear dimensionality reduction. 
More generally, we also  show that maximising the sum of the volume squares of all $q$-dimensional facets of this bonding box is  equivalent to PCA of matrix $A$, for any $1\le q \le p$, see Section \ref{sec:general conic}. In particular, Programs (\ref{pca by trace}) and (\ref{pca by det}) are special cases with $q=1$ and $q=p$, respectively. 
\label{fig:Vis2}  }
\end{center}
\end{figure}
\end{center}
At any rate, we see that Programs (\ref{pca by trace},\ref{pca by det}) are both sensible approaches for linear dimensionality reduction, but that their geometric justifications are very different. Somewhat surprisingly, we find that  Program \eqref{pca by det} also performs PCA of $A$ and has no spurious local optima, exactly like Program \eqref{pca by trace}. The next result is proven in Section \ref{details}.

\begin{theorem}\label{det optimality} \textbf{\emph{(Determinant)}}
The following statements hold true: 
\begin{itemize}
\item[i) ] $\widetilde{X}\in\mathbb{R}^{n\times p}$ is a global maximiser of Program (\ref{pca by det}) if and only if there exists a $p$-leading right singular factor $V_p\in\mathbb{R}^{n\times p}$ of $A$ such that $\range(\widetilde{X})=\range(V_p)$. 
\item[ii) ] Program (\ref{pca by det}) does not have any spurious local optima, namely, any local maximum or minimum of Program  (\ref{pca by det}) is also a global maximum or minimum, respectively, and all other stationary points are  \edit{strict  saddle points. Moreover, if $\sigma_p(A) > \sigma_{p+1}(A)$,   at any such strict saddle point $X_s$, there exists an ascent direction $\Delta\in \mathbb{R}^{n\times p}$ such that 
\begin{equation}
\nabla^2 f_{\det}(X_s)[\Delta,\Delta]  \ge  f_{\det}(X_s) \l( \frac{\sigma_p^2(A)}{\sigma_{p+1}^2(A)} -1 \r) \| \Delta\|_F^2,
\label{eq:strict-genpca-thm}
\end{equation}
where the bilinear operator $\nabla^2 f_{\det}(X_s)$ is the Hessian of $f_{\det}$ at $X_s$. Above, $\sigma_p(A)$ is the $p$-th largest singular value of $A$.
}
\end{itemize}
\end{theorem}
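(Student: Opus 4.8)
Set $C:=A^{*}A\succeq0$ with ordered spectral decomposition $C=\sum_{k=1}^{n}\lambda_{k}v_{k}v_{k}^{*}$, so that $\lambda_{1}\ge\cdots\ge\lambda_{n}\ge0$ and $\lambda_{k}=\sigma_{k}^{2}(A)$; I take $\rank(A)\ge p$ for granted (otherwise $f_{\det}\equiv 0$ and everything is trivial). The argument rests on two reductions. By the change‑of‑basis invariance \eqref{invariance of det} it suffices to study $f_{\det}$ on $\Stiefel(n,p)$, where $f_{\det}(X)=\det(X^{*}CX)$, and since $f_{\det}$ then descends to $\Grassman(n,p)$ every claim becomes a claim about the subspace $\mathcal{S}:=\range(X)$. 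Next, a direct computation on the open set $\RN^{n\times p}_{p}$ gives $\nabla f_{\det}(X)=2f_{\det}(X)\bigl(CX(X^{*}CX)^{-1}-X(X^{*}X)^{-1}\bigr)$ whenever $X^{*}CX$ is invertible, so such an $X$ is stationary iff $CX=X\bigl[(X^{*}X)^{-1}X^{*}CX\bigr]$, i.e.\ iff $\mathcal{S}$ is $C$‑invariant. (The only other stationary points are those with $\det(X^{*}CX)=0$; since $f_{\det}\ge0$ everywhere, these are global minima and cause no trouble.) A $C$‑invariant $\mathcal{S}$ carries an orthonormal eigenbasis of $C$, so modulo the gauge action we may write $X=[\,v_{k}:k\in P\,]$ and $X_{\perp}=[\,v_{k}:k\notin P\,]$ for a $p$‑set $P\subseteq\{1,\dots,n\}$, whence $M:=X^{*}CX=\diag(\lambda_{k})_{k\in P}$, $N:=X_{\perp}^{*}CX_{\perp}=\diag(\lambda_{k})_{k\notin P}$, and $f_{\det}(X)=\prod_{k\in P}\lambda_{k}$. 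Thus each (genuine) stationary point is labelled by a choice $P$ of $p$ eigenvalues, with value their product.

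For part (i) I would bound $f_{\det}$ by Cauchy--Binet. Writing $Y:=[v_{1}\,\cdots\,v_{n}]^{*}X$, one gets $\det(X^{*}CX)=\sum_{|S|=p}\bigl(\prod_{k\in S}\lambda_{k}\bigr)\det(Y_{S})^{2}$ with $\sum_{|S|=p}\det(Y_{S})^{2}=\det(Y^{*}Y)=1$, exhibiting $f_{\det}(X)$ as a convex combination of the products $\prod_{k\in S}\lambda_{k}$; hence $f_{\det}(X)\le\prod_{k=1}^{p}\lambda_{k}$. Equality forces all the weight onto index sets of maximal product, which (after handling ties inside the leading eigenspace) happens exactly when $\mathcal{S}$ is spanned by $p$ leading eigenvectors of $C$, i.e.\ $\range(X)=\range(V_{p})$ for some $p$‑leading right singular factor $V_{p}$; the converse inclusion plainly gives the maximal value. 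Equivalently, one may argue straight from the stationarity analysis: a global maximiser is stationary, hence $C$‑invariant with $\prod_{k\in P}\lambda_{k}=\prod_{k=1}^{p}\lambda_{k}$, and this pins $P$ down.

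For part (ii) I would work in the transverse chart at a stationary point. Using $X^{*}X_{\perp}=0$ and $CX_{\perp}=X_{\perp}N$, a one‑line expansion yields
\[
f_{\det}(X+X_{\perp}\Omega)=\frac{\det(M+\Omega^{*}N\Omega)}{\det(I_{p}+\Omega^{*}\Omega)},\qquad\Omega\in\RN^{(n-p)\times p}.
\]
Feeding in $\Omega=t\,e_{j}e_{i}^{*}$ with $i\in P$, $j\notin P$ collapses this to $f_{\det}(X)\,(1+t^{2}\lambda_{j}/\lambda_{i})/(1+t^{2})$, whose second derivative at $t=0$ is $2f_{\det}(X)(\lambda_{j}/\lambda_{i}-1)$; since $\Delta:=X_{\perp}e_{j}e_{i}^{*}$ has $\|\Delta\|_{F}=1$, this is exactly $\nabla^{2}f_{\det}(X)[\Delta,\Delta]$. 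If $X$ is stationary but not a global maximiser then $\prod_{k\in P}\lambda_{k}<\prod_{k=1}^{p}\lambda_{k}$, and such a deficit is impossible unless some eigenvalue outside $P$ strictly exceeds some eigenvalue inside $P$; the corresponding $(i,j)$ then gives $\nabla^{2}f_{\det}(X)[\Delta,\Delta]>0$, an ascent direction, so $X$ is not a local maximiser — hence no spurious local optima. A symmetric argument produces a descent direction unless $X$ is a global minimiser, so every stationary point that is neither a global max nor min is a strict saddle. Finally, if $\sigma_{p}(A)>\sigma_{p+1}(A)$, i.e.\ $\lambda_{p}>\lambda_{p+1}$, the $p$ eigenvalues $\ge\lambda_{p}$ occupy exactly positions $1,\dots,p$, so a non‑optimal $P$ must leave out one of these — whence $\max_{k\notin P}\lambda_{k}\ge\lambda_{p}$ — and pick up an eigenvalue $\le\lambda_{p+1}$ — whence $\min_{k\in P}\lambda_{k}\le\lambda_{p+1}$; choosing $i,j$ accordingly,
\[
\nabla^{2}f_{\det}(X)[\Delta,\Delta]=2f_{\det}(X)\Bigl(\tfrac{\lambda_{j}}{\lambda_{i}}-1\Bigr)\|\Delta\|_{F}^{2}\ \ge\ 2f_{\det}(X)\Bigl(\tfrac{\sigma_{p}^{2}(A)}{\sigma_{p+1}^{2}(A)}-1\Bigr)\|\Delta\|_{F}^{2},
\]
which yields \eqref{eq:strict-genpca-thm} (with a factor of $2$ to spare, which one may discard).

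The routine parts are the Cauchy--Binet bound and the two‑line Hessian computation in the transverse chart. The care is concentrated in the linear‑algebra bookkeeping, and this is where I expect the main obstacle to lie: making the stationary‑point/$C$‑invariant‑subspace correspondence airtight (including the degenerate singular locus), and, above all, proving the combinatorial fact behind the quantitative claim — that a non‑globally‑optimal eigenvalue selection always admits an ``eigenvalue swap of ratio at least $\lambda_{p}/\lambda_{p+1}$'' — which is exactly where the gap hypothesis $\sigma_{p}(A)>\sigma_{p+1}(A)$ is used and where repeated singular values (also in the equality analysis for part (i)) demand attention. A secondary, more cosmetic point is that the Euclidean Hessian of $f_{\det}$ on $\RN^{n\times p}_{p}$ is faithfully captured by the chart $\Omega\mapsto X+X_{\perp}\Omega$: this is legitimate because the gauge directions $X\mapsto X\Theta$ leave $f_{\det}$ unchanged, so curvature can only arise from the transverse component of a perturbation, and in any event the theorem asks only for one ascent direction.
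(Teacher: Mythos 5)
Your argument is correct and, for part (ii), follows essentially the same route as the paper: stationarity of the (gauge-reduced) problem is equivalent to $\range(X)$ being an invariant subspace of $A^*A$, stationary points are labelled by selections of $p$ eigenvalues, and a single coordinate ``eigenvalue swap'' perturbation transverse to $\range(X)$ yields the explicit Hessian value $\nabla^2 f_{\det}(X)[\Delta,\Delta]=2f_{\det}(X)(\lambda_j/\lambda_i-1)\|\Delta\|_F^2$ --- your closed-form chart $\Omega\mapsto f_{\det}(X+X_\perp\Omega)=\det(M+\Omega^*N\Omega)/\det(I_p+\Omega^*\Omega)$ with rank-one $\Omega$ is exactly the content of the paper's Lemma \ref{lem:hessian calculated} specialised to $\phi=\det$, and also of the paper's alternative ``rotation trajectory'' argument at the end of Section \ref{details}. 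The one genuine divergence is part (i): you prove the global characterisation directly via Cauchy--Binet (exhibiting $f_{\det}(X)$ as a convex combination of products of $p$ eigenvalues), whereas the paper deduces it from the stationary-point classification together with compactness of $\Stiefel(n,p)$; the paper explicitly acknowledges the Cauchy--Binet route as an alternative, and your version has the advantage of giving the sharp upper bound and its equality cases without invoking the landscape analysis. Two harmless discrepancies to be aware of: your Hessian carries the standard factor of $2$ that the paper's convention absorbs (so your bound is stronger than \eqref{eq:strict-genpca-thm} and implies it, as you note), and your dismissal of the singular locus $\det(X^*CX)=0$ as consisting of global minima is sound since $f_{\det}\ge 0$ there attains its minimum.
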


\noindent In words, Part i) of Theorem \ref{det optimality} states that  Program \eqref{pca by det} performs PCA on the data matrix $A$, and therefore  Programs (\ref{pca by trace},\ref{pca by det}) are equivalent in this sense. Note that Program \eqref{pca by det} provides a different geometric interpretation of PCA based on maximising the ``volume'' of  projected data rather than its ``diameter'', which was the case in Program \eqref{pca by trace}. Even though we present  a new proof for the characterisation of the global maximisers of Program \eqref{pca by det} in Part i) of  Theorem \ref{det optimality}, this result can also be proved using interlacing properties of singular values, see Corollary 3.2 in 
\cite{horn1994topics}, or via the Cauchy-Binet formula \cite{karlin1988generalized}. 

The main contribution of Theorem \ref{det optimality} is  its Part ii) about the global landscape of the objective function $f_{\det}$,  stating that the non-convex Program \eqref{pca by det} behaves like a convex problem in the sense that any local maximiser (minimiser) of Program \eqref{pca by det} is also a global maximiser (minimiser). \edit{Moreover,   saddle points of Program~\eqref{pca by det} are strict.} 
In this way too, the two Programs (\ref{pca by trace},\ref{pca by det}) are similar, see Section \ref{introduction}.  Note that Part ii) of Theorem \ref{det optimality}
is crucial in the design of new dimensionality reduction algorithms: The instability of all stationary points except the global optima \edit{and the strictness of all saddle points establishes that, for example, {stochastic} gradient ascent, converges to the correct solution in polynomial time~\cite{jin2017escape,mokhtari2018escaping}}. That is, the non-convex Program~\eqref{pca by det} can be \edit{efficiently} solved to global optimality. However, as discussed in Section~\ref{introduction}, computationally efficient algorithms for PCA are already available and application of, say, stochastic gradient ascent to Program~\eqref{pca by det} is not intended to replace those algorithms. Instead, as discussed in Section~\ref{structured}, the unconstrained Program~\eqref{pca by det} \edit{potentially} opens up a radically new approach to structured PCA.

We remark that Theorem \ref{det optimality} is in line with a recent trend in computational sciences to understand the geometry and performance of non-convex programs and algorithms \cite{li,sun2015nonconvex,eftekhari2016snipe,burer2003nonlinear,boumal2016non,bhojanapalli2016dropping,bhojanapalli2016global,ge2016matrix,jin2016provable,soltanolkotabi2017theoretical,ge2017learning,ge2015escaping,ge2017optimization}.  While the available results do not apply to our problem, the underlying phenomena are closely related. Perhaps the closest result to our work is \cite{ge2016matrix}, stating that the (non-convex) matrix completion program has no spurious local optima when given access to randomly-observed matrix entries. This result in a sense extends the  EYM~Theorem~\cite{eckart, mirsky} to partially-observed matrices. 

From a computational perspective,  we may consider the program 
\begin{equation}\label{pca by log det}
\arg\max\left\{\log(f_{\det}(X)):\;X\in\RN^{n\times p}_p\right\},
\end{equation}
which is equivalent to Program \eqref{pca by det} but has better numerical stability. As a numerical example, we generated generic $U,V\in\Orth(100)$ and random matrix $A\in\RN^{100\times 100}$ with SVD $A=U\Sigma V^*$. The singular values of $A$, namely the entries of the diagonal matrix $\Sigma\in\RN^{100\times 100}$, were selected according to the power law. To be specific, we took $\sigma_i = i^{-1}$ to generate Figure \ref{fig:power1} and $\sigma_i=i^{-2}$ to generate Figure \ref{fig:power2}, for every $i\in[100]=\{1,2,\cdots,100\}$. For $p=5$, we let $V_p\in\RN^{n\times p}$ denote the first $p$ columns of $V$ and, by Theorem \ref{det optimality}, the unique maximiser of Programs (\ref{pca by det},\ref{pca by log det}). (Note that $V_p$ is also the unique maximiser of Program \eqref{pca by trace} by the EYM Theorem.)  In order to find $V_p$, we then applied  gradient  ascent  to Program \eqref{pca by log det} with fixed step size of $\rho = 5$ and random initialisation, producing a sequence of estimates $\{X_l\}_l\subset\RN^{n\times p}$. We also recorded the error $\| X_l {X}_l^\dagger - V_pV_p^* \|$ in the $l$th iteration, namely the sine of the principal angle between $\range(X_l)$ and $\range(V_p)$, which is plotted in Figures \ref{fig:power1} and \ref{fig:power2}. As predicted by Theorem \ref{det optimality}, the error vanishes in both examples as the algorithm progresses. \edit{We also refer the interested reader to \cite{hauser2018pca} for a comparison between Programs (\ref{pca by trace},\ref{pca by log det}), as well as  LAPACK's implementation of Lanczos’ method~\cite{golub1996matrix} for performing PCA. While the numerical results presented in \cite{hauser2018pca} are encouraging, a more comprehensive study is required to investigate the competitiveness of Program~\eqref{pca by log det} for PCA, as an alternative to more mainstream approaches \cite{golub1996matrix}.}

\edit{It might also be helpful to highlight the following practical consideration.}
Let $\widetilde{X}\in\mathbb{R}^{n\times p}$ denote a maximiser of Programs \eqref{pca by det} or \eqref{pca by log det}. \edit{Given $\widetilde{X}$, a few extra steps are required to compute the complete SVD of $A$, which we now list.} Let $\widehat{X}\in \RN^{n\times p}$ be an orthonormal basis for $\widetilde{X}$, which  can be computed in $O(np^2)$ operations by SVD.  Then computing the  SVD of 
$A\widehat{X}=\widehat{U}\widehat{\Sigma}\widehat{V}^*$ can be performed in merely $O(m p^2)$ operations and yields  the diagonal coefficients 
of $\widehat{\Sigma}$ as the $p$ leading singular values of $A$, as well as  $\widehat{U}$ and $\widehat{X}\widehat{V}$ as the corresponding 
$p$ leading left and right singular vectors of $A$, respectively.

\begin{center}
\begin{figure}[h]
\begin{center}
\includegraphics[width=0.5\textwidth]{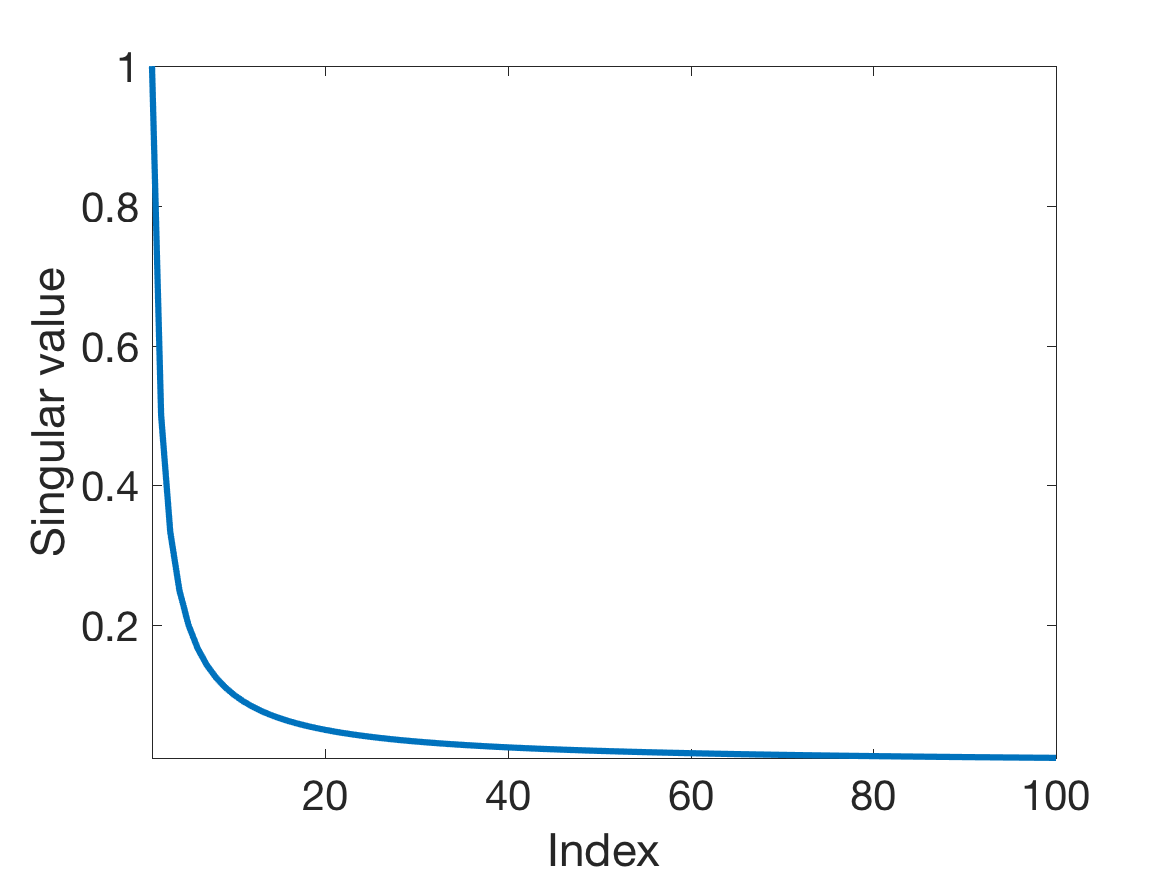}\includegraphics[width=0.5\textwidth]{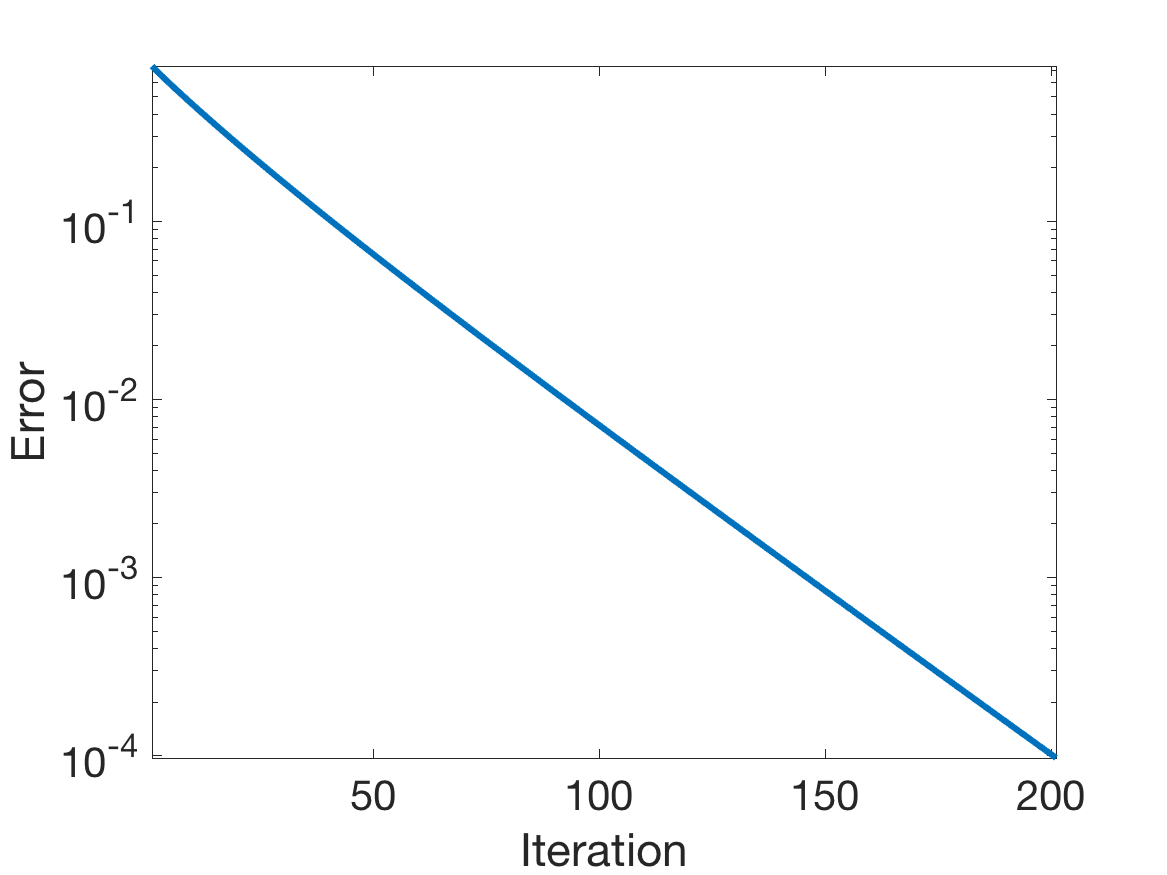}
\end{center}
\caption{The left panel shows the spectrum $\{\sigma_i\}_{i=1}^{100}$ of a randomly generated matrix $A\in\RN^{100\times 100}$ with $\sigma_i=i^{-1}$, and the right panel shows the progression of the gradient ascent algorithm with fixed step size, applied to Program \eqref{pca by log det}, see Section \ref{sec:pca by det} for details.
\label{fig:power1}  }
\end{figure}
\end{center}

\begin{center}
\begin{figure}[h]
\begin{center}
\includegraphics[width=0.5\textwidth]{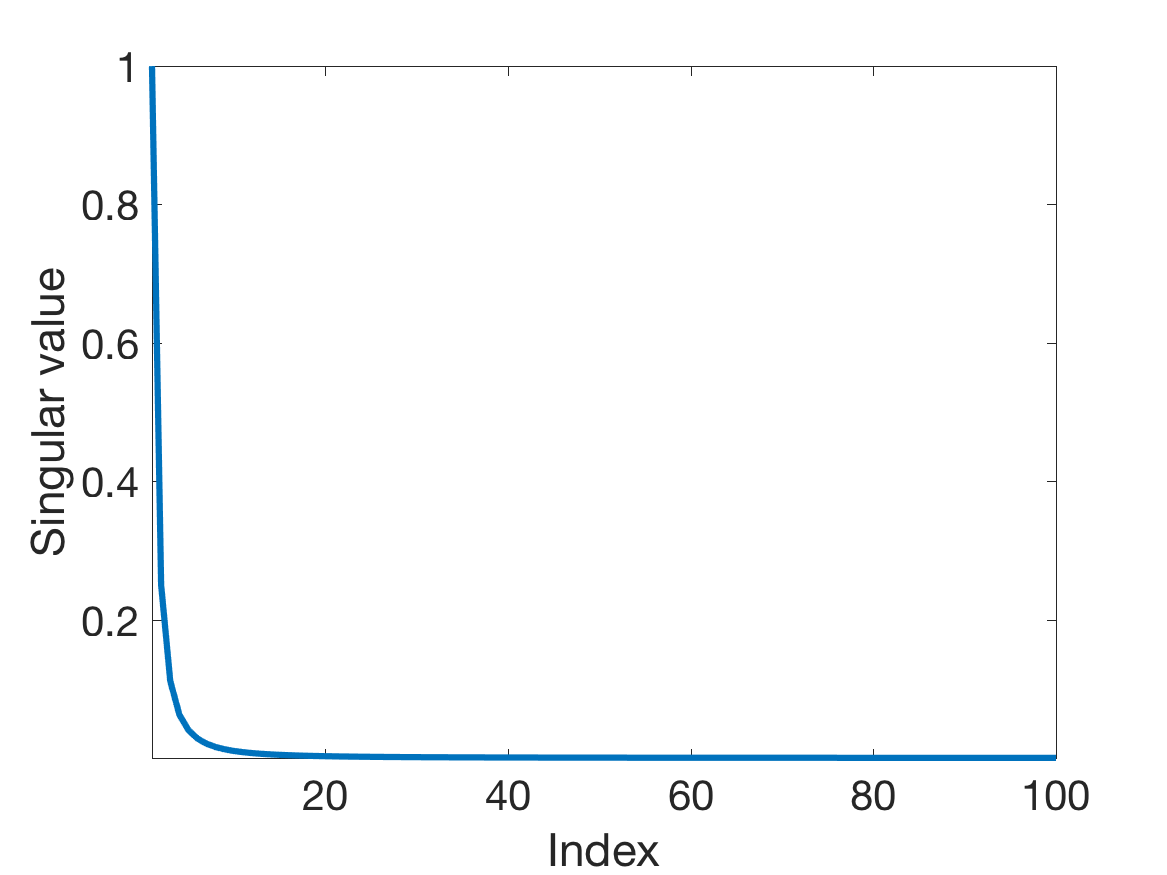}\includegraphics[width=0.5\textwidth]{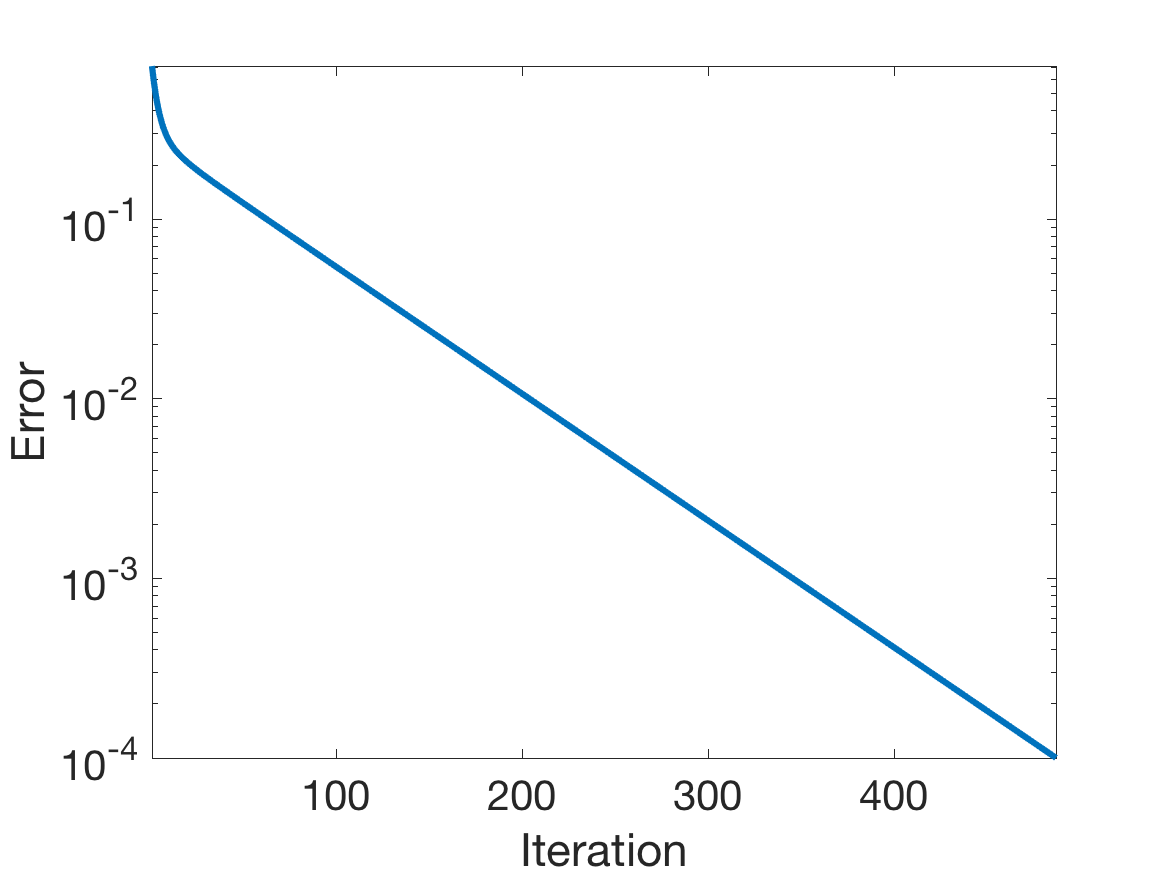}
\end{center}
\caption{ The left panel shows the spectrum $\{\sigma_i\}_{i=1}^{100}$ of a randomly generated matrix $A\in\RN^{100\times 100}$ with $\sigma_i=i^{-2}$, and the right panel shows the progression of the gradient ascent algorithm with fixed step size, applied to Program \eqref{pca by log det}, see Section \ref{sec:pca by det} for details. 
\label{fig:power2} }
\end{figure}
\end{center}

\section{Structured PCA \label{structured}}

\edit{As we will see  in this section,} the unconstrained formulation of PCA in Program~\eqref{pca by det} \edit{might be of particular interest in practice}, in contrast to Program~\eqref{pca by trace} which is restricted to the Stiefel manifold. \edit{Indeed,} the determinant formulation of PCA in Program \eqref{pca by det} \edit{might} allow for a more elegant approach to structured PCA, in which we wish to impose additional structure on the loading matrix, such as sparsity or nonnegativity. 

For the \edit{purposes} of this brief \edit{and informal} discussion, let us focus on sparse PCA, the problem of finding a small number of features that best describe the data matrix $A\in\RN^{m\times n}$. As one \edit{application}, when working with gene expression data, we are interested in a small number of features (genes) \edit{which} are responsible for certain traits or diseases \cite{johnstone2009consistency,deshpande2014information}.  The ``dual'' of sparse PCA can also be interpreted as data clustering.  

\edit{Loosely speaking}, sparse PCA is the problem of finding a \emph{sparse}\footnote{A sparse matrix has a small number of nonzero entries.} matrix $X\in\RN^{n\times p}$ that retains, in the projected data  $AX\in\RN^{m\times p}$, as much as possible of the energy of $A$. \edit{More formally}, sparse PCA might be formulated as a natural generalisation of Program \eqref{pca by trace}, namely, 
\begin{equation}
\arg\max\l\{ f_{\trace}(X):\; X\in \Stiefel(n,p) \mbox{ and } \|X\|_0 \le k\r\},
\label{eq:unnatural sparse pca}
\end{equation}
where $\|X\|_0$ is the number of nonzero entries of $X$, and the typically small integer $k$ is the \emph{sparsity level}. Note that Program \eqref{eq:unnatural sparse pca} forces $X$ to have orthonormal columns \emph{and} few nonzero entries, which tends to be restrictive and is also a somewhat questionable objective \edit{in the first place}. With a few exceptions, particularly  \cite{journee2010generalized}, this problem is often addressed by \emph{deflating} $A$, namely, finding the sparse principal components sequentially, that is,  one by one. Indeed, note that the Stiefel constraint from Program \eqref{eq:unnatural sparse pca} is redundant when $p=1$, namely, when $X$ is  a column vector. One could therefore find the leading sparse principal component of $A$, say $\widetilde{x}_1\in\RN^n$, by solving Program~\eqref{eq:unnatural sparse pca} with $p=1$,  remove its contribution from $A$ by forming $A_1 = A- A\widetilde{x}_1\widetilde{x}_1^*$, and then solve Program~\eqref{eq:unnatural sparse pca} with $A_1$ in place of $A$ to find the second sparse principal component $\widetilde{x}_2\in\RN^n$, and so on \cite{d2005direct}.  \edit{However,} deflating $A$ is believed to be inherently problematic when the problem is ill-posed~\cite{journee2010generalized}.

The determinant formulation of PCA in Program \eqref{pca by log det} might provide an elegant alternative to Program~\eqref{eq:unnatural sparse pca}. Recall that the feasible set of  Program \eqref{pca by log det} is an open subset of $\RN^{n\times p}$ with nonempty interior, and thus Program \eqref{pca by log det} is effectively unconstrained. We can therefore formulate sparse PCA by imposing a sparsity constraint on Program \eqref{pca by log det}, namely, 
\begin{equation}
\arg\max\l\{ \log(f_{\det}(X)):\; X\in \RN^{n\times p}_p \mbox{ and } \|X\|_0 \le k\r\},
\label{eq:natural sparse pca}
\end{equation}
which requires $X$ to be full-rank and sparse, relaxing the far more restrictive requirement of being Stiefel and sparse  in Program \eqref{pca by trace}. Note that removing the full-rank requirement in Program \eqref{eq:natural sparse pca} is impossible as that would mean $A$ has fewer than $p$ principal components and therefore the problem is ill-defined.

Similar ideas might be applied to nonnegative matrix factorisation, in which $X$ and $AX$ are both required to be nonnegative. More generally, the unconstrained nature of Program \eqref{pca by log det} might provide an entirely new approach to many structured dimensionality reduction problems, a \edit{research direction}  that remains to be explored in the future. \edit{In particular, what is the global geometry of Program~\eqref{eq:natural sparse pca}? What is the precise relationship between Programs~\eqref{eq:unnatural sparse pca} and \eqref{eq:natural sparse pca}?}

\section{Generalisation to Positive Symmetric Polynomials \label{sec:general conic}}

So far, we have seen that maximising the trace objective function in Program~\eqref{pca by trace} and maximising the determinant objective function in Program~\eqref{pca by det} are  equivalent, and both provide the leading principal components of the data matrix $A$. Moreover, both non-convex programs can be solved to global optimality efficiently, \edit{see the discussion after Theorem~\ref{det optimality} for example.} Indeed, these claims \edit{for} Program \eqref{pca by trace} follow from the EYM Theorem~\cite{eckart,mirsky} and the claims \edit{for} Program \eqref{pca by det} follow from Theorem \ref{det optimality}, see Sections \ref{introduction} and~\ref{sec:pca by det}.

Note that both $\mbox{tr}(X^*A^*AX)$ and $\det(X^*A^*AX)$ are \emph{elementary symmetric polynomials},  namely, both are coefficients of the \emph{characteristic polynomial} of $X^*A^*AX$. More specifically, let $X^*A^*AX = W_X  \diag(\lambda_X)  W_X^*$ be the eigen-decomposition of $X^*A^*AX$, where $W_X\in\Orth(p)$ is an orthonormal matrix and the vector $\lambda_X\in\RN^p$ contains the eigenvalues of $X^*A^*AX$. Here, $\diag(\lambda_X)\in\RN^{p\times p}$ is the diagonal matrix formed by the vector $\lambda_X$. Then the characteristic polynomial associated with $X^*A^*AX$ takes $t\in\RN$ to 
\begin{align}
\det(\I_p + t X^*A^*AX) & = \det( W_X (\I_p+ t\diag(\lambda_X) ) W_X^*)
\qquad \l(W_X\in\Orth(p) \r) \nonumber\\
& = \det(W_X) \cdot \det(\I_p + t \diag(\lambda_X) )  \cdot \det(W_X^*)
\nonumber\\
& = \det \l[ 
\begin{array}{cccc}
1+t\cdot \lambda_{X,1} & 0 & \cdots & 0\\
0 & 1+t\cdot \lambda_{X,2} & \cdots & 0\\
\vdots & \vdots & \ddots & \\
0 & 0 & \cdots & 1+t \cdot \lambda_{X,p}
\end{array}
\r] 
\qquad \l( \det(W_X) = 1\r)
\nonumber\\
& = \prod_{i=1}^p (1+ t \cdot \lambda_{X,i}) \nonumber\\
& =: \sum_{q=0}^p s_q(X^*A^*AX) \cdot t^q,
\end{align} 
where the $q$th elementary symmetric polynomial $s_q:\Sym(p)\rightarrow \RN$ is the coefficient of $t^q$ above, namely,
\begin{eqnarray}
s_q:\Sym(p) &\rightarrow&\RN\nonumber\\
B &\mapsto& \sum_{1\le i_1< \cdots <i_q \le p}\,  \prod_{j=1}^q \lambda_{B,i_j},\label{sq}
\end{eqnarray}
with the convention that $s_0(B) = 1$. Above, $\{\lambda_{B,i}\}_{i=1}^p$ are the eigenvalues of $B\in\Sym(p)$, 
and $\Sym(p)$ denotes the set of symmetric $p\times p$ matrices over the real numbers. 
We also remark that elementary symmetric polynomials are \emph{spectral} functions in that they only depend on the eigenvalues of the input matrix.  As mentioned earlier, $\trace(X^*A^*AX)=s_1(X^*A^*AX)$ and $\det(X^*A^*AX) = s_p(X^*A^*AX)$. 

In analogy to trace and determinant objective functions (\ref{gtrace},\ref{gdet}), let us define 
\begin{eqnarray}
f_{s_q}:\RN^{n\times p}&\rightarrow&\RN\nonumber\\
X&\mapsto& \frac{s_q(X^*A^*AX)}{s_q(X^*X)},\label{fq}
\end{eqnarray}
for every $q\in [p]:=\{1,\dots,p\}$. In particular, $f_{s_1}=f_{\trace}$  in (\ref{gtrace}) and $f_{s_p}=f_{\det}$ in (\ref{gdet}) are two special cases. Lastly, in analogy to Programs (\ref{pca by trace},\ref{pca by det pre}), consider the program 
\begin{equation}
\arg\max\l\{ f_{s_q}(X) :\; X\in\Stiefel(n,p)\r\}.
\label{eq:general program}
\end{equation}
Again note that Programs (\ref{pca by trace}) and (\ref{pca by det pre}) are special cases of Program \eqref{eq:general program} for $q=1$ and $q=p$, respectively. Revisiting the geometric interpretation discussed in Section \ref{sec:pca by det}, we may also verify that ${f_{s_q}(X)}$ is proportional to the sum of volumes squared of all $q$-dimensional facets of the bounding box $B_c$, see right after \eqref{eq:bounding box defined} and also Figure \ref{fig:Vis2}. In this sense, Program \eqref{eq:general program} finds the projected data $AX$ that maximises this geometric attribute.

Generalising the EYM Theorem for Program \eqref{pca by trace} and Theorem \ref{det optimality} for Program \eqref{pca by det pre}, the following result states that Program \eqref{eq:general program} performs PCA of $A$ and has no spurious local optima for every $q\in[p]$, see Section~\ref{sec:proof of general optimality} for the proof. 
\begin{theorem}\label{general optimality} \textbf{\emph{(Elementary Symmetric Polynomials)}}
For every $q\in[p]$, the following statements hold true: 
\begin{itemize}
\item[i) ] $\widetilde{X}\in\mathbb{R}^{n\times p}$ is a global maximiser of Program (\ref{eq:general program}) if and only if there exists a $p$-leading right singular factor $V_p\in\mathbb{R}^{n\times p}$ of $A$ such that $\range(\widetilde{X})=\range(V_p)$. 
\item[ii) ] Program~(\ref{eq:general program}) 
does not have any spurious local optima, namely, any local maximum or minimum of Program~(\ref{eq:general program}) is also a global maximum respectively minimum, and all other stationary points are \edit{strict  saddle points.} 
\edit{
 }
\end{itemize}
\end{theorem}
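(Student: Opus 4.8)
The plan is to follow the blueprint of Theorem~\ref{det optimality} (the case $q=p$), adapting each step to a general elementary symmetric polynomial $s_q$. First I would diagonalise $A^*A$: writing an SVD $A=U\Sigma V^*$ and substituting $Y:=V^*X$, which ranges over $\Stiefel(n,p)$ exactly as $X$ does and satisfies $\range(X)=\range(V_p)$ if and only if $\range(Y)=V^*\range(V_p)$, one gets $X^*A^*AX=Y^*DY$ with $D=\diag(d_1,\dots,d_n)$, $d_i=\sigma_i(A)^2$, $d_1\ge\cdots\ge d_n\ge 0$. Since $s_q(X^*X)=s_q(\I_p)=\binom{p}{q}$ is constant on $\Stiefel(n,p)$, Program~\eqref{eq:general program} becomes the maximisation of $h(Y):=s_q(Y^*DY)$ over $Y\in\Stiefel(n,p)$. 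For Part i) I would invoke the Poincar\'{e} separation inequality $\mu_i(Y^*DY)\le d_i$; since $s_q$ has nonnegative coefficients and is coordinatewise nondecreasing on $\RN^p_{\ge 0}$, this gives $h(Y)\le s_q(d_1,\dots,d_p)$ with equality when $\range(Y)$ is the span of $e_1,\dots,e_p$. Assuming $\sigma_p(A)>0$ (otherwise the problem is ill-posed, cf.\ Section~\ref{structured}), $s_q$ is in fact strictly increasing in each coordinate on $\prod_{i=1}^p[0,d_i]$, so equality forces $\mu_i(Y^*DY)=d_i$ for all $i$, and a successive Rayleigh-quotient argument then pins $\range(Y)$ to a span of $p$ leading eigenvectors of $D$ (determined up to rotations inside the $\sigma_p(A)=\sigma_{p+1}(A)$ eigenspace), which is the stated characterisation.

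The heart of Part ii) is the stationarity analysis. The key identity is the Euclidean gradient $\nabla h(Y)=2\,DYP$ with $P:=\nabla s_q(Y^*DY)\in\Sym(p)$; because $s_q$ is a spectral function, $P$ shares an eigenbasis with $B:=Y^*DY$ and hence commutes with it, so $Y^*\nabla h(Y)=2BP$ is automatically symmetric and the Stiefel stationarity condition $\nabla h(Y)=\tfrac12 Y\big(Y^*\nabla h(Y)+\nabla h(Y)^*Y\big)$ collapses to $(DY-YB)P=0$. Now $s_q(B)>0$ if and only if $\rank B\ge q$, and in that regime every eigenvalue of $P$ — namely $\partial s_q/\partial\mu_i$, a sum of products of $q-1$ of the remaining eigenvalues of $B$ — is strictly positive, so $P\succ 0$ and stationarity reduces to $DY=YB$. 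Thus every stationary point with $f_{s_q}(X)>0$ spans a $D$-invariant subspace: the columns of $X$ span $p$ right singular vectors of $A$, say with squared singular values $d_{i_1}\ge\cdots\ge d_{i_p}$, and then $f_{s_q}(X)=s_q(d_{i_1},\dots,d_{i_p})/\binom{p}{q}$; stationary points with $f_{s_q}(X)=0$ (possible only when $\rank A\le n-p+q-1$) already attain the global minimum and require no further analysis.

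Over this finite list of stationary values, the maximum is attained exactly when $\{i_1,\dots,i_p\}$ indexes the $p$ largest squared singular values — the global maxima of Part i) — and the minimum when it indexes the $p$ smallest — the global minima. For any other stationary point the selection is neither leading nor trailing, so there are a selected singular vector $v_i$ and an unselected one $v_j$ with $d_j>d_i$; rotating $v_i$ towards $v_j$ produces a curve $Y(t)\subset\Stiefel(n,p)$ along which $Y(t)^*DY(t)$ stays diagonal with $d_i$ replaced by $\cos^2 t\,d_i+\sin^2 t\,d_j$, so $h$ is affine in that entry and $\frac{d^2}{dt^2}\big|_{t=0}f_{s_q}(Y(t))=2(d_j-d_i)\,s_{q-1}\big(\{d_{i_\ell}:i_\ell\ne i\}\big)/\binom{p}{q}>0$, the remaining $p-1$ squared singular values containing at least $q-1$ positive ones since $\rank B\ge q$. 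This exhibits a strict ascent direction; the symmetric rotation towards a singular vector with strictly smaller singular value, which also exists, gives a strict descent direction. Hence local maxima equal global maxima, local minima equal global minima, and every remaining stationary point is a strict saddle.

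The part I expect to be genuinely delicate is the rank-deficient case: pinning down exactly when $\nabla s_q(Y^*DY)$ fails to be positive definite, verifying that the factor $s_{q-1}(\cdot)$ multiplying $(d_j-d_i)$ in the curvature cannot vanish at a non-minimal stationary point, and handling ties among the singular values — in particular the boundary tie $\sigma_p(A)=\sigma_{p+1}(A)$, which is precisely why Part i) asserts the existence of a leading factor with the same range rather than uniqueness. The second-order computation itself is painless once one notices that the natural rotation keeps $Y^*DY$ diagonal and makes $s_q$ affine in the single moving eigenvalue; it is the bookkeeping around degeneracies and multiplicities that needs care.
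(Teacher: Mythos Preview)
Your approach is correct and reaches the same conclusions as the paper, but the execution differs in a few places worth recording. For Part~i) you argue directly via Poincar\'e separation and the coordinatewise monotonicity of $e_q$ on the nonnegative orthant, whereas the paper obtains the characterisation of global maximisers only as a byproduct of the full stationary-point classification. For stationarity you reach the same invariant-subspace condition $DY=YB$ (the paper's $\range(X_s)=\range(\Gamma X_s)$), but via the clean factorisation $(DY-YB)\nabla s_q(B)=0$ together with the observation that $\nabla s_q(B)$ commutes with $B$ and is positive definite precisely when $s_q(B)>0$; the paper instead expands $\nabla f_{s_q}$ explicitly through the spectral-gradient formula and then invokes Lemma~\ref{lem:stationary}. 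Your dichotomy ($s_q(B)=0$ forces a global minimum; $s_q(B)>0$ forces an invariant subspace) is in fact more careful than the paper, which simply assumes $X_s^*\Gamma X_s\in\GL(p)$ throughout. For the curvature step you use the Givens-rotation curve that keeps $Y(t)^*DY(t)$ diagonal and makes $e_q$ affine in the single moving eigenvalue; the paper presents exactly this trick at the end of Section~\ref{details} as an alternative for the determinant case, but its main argument for Theorem~\ref{general optimality} goes through the general Hessian formula of Lemma~\ref{lem:hessian calculated}. Your route is more elementary and self-contained; the paper's buys a closed-form Hessian expression valid for any differentiable spectral objective, which is what makes the conic combination in Corollary~\ref{conic optimality} immediate. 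The degeneracies you flag (ties at $\sigma_p=\sigma_{p+1}$, possible vanishing of the $e_{q-1}$ factor) are real but are handled by your $\rank B\ge q$ hypothesis and by the observation that the remaining $p-1$ selected eigenvalues still contain at least $q-1$ positive ones, exactly as you note.
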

\noindent In words, Theorem \ref{general optimality} introduces a family of equivalent formulations for PCA, namely, Program \eqref{general optimality} for every $q\in [p]$. This family includes PCA by trace optimisation (Program \eqref{pca by trace}) and PCA by determinant optimisation (Program \eqref{pca by det}). 
In fact, maximising \emph{any} conic combination of elementary  symmetric polynomials also performs PCA. To be specific, for  nonnegative \edit{(but not all zero)} coefficients  $\{w_q\}_{q=0}^p$, consider the symmetric function 
\begin{eqnarray}
g_w:\Sym(p) &\rightarrow&\RN\nonumber\\
B &\mapsto& \sum_{q=0}^p w_q \cdot s_q(B),
\label{eq:psi-c}
\end{eqnarray}
where the elementary symmetric polynomial $s_q$ was defined in \eqref{sq}. Also define 
\begin{eqnarray}
f_{g_w}:\RN^{n\times p}&\rightarrow&\RN\nonumber\\
X&\mapsto& \frac{g_w(X^*A^*AX)}{g_w(X^*X)},\label{conic}
\end{eqnarray}
and consider the program 
\begin{equation}
\arg\max\l\{ f_{g_w}(X) :\; X\in\Stiefel(n,p)\r\}.
\label{eq:conic program}
\end{equation}
The following result is an immediate consequence of Theorem \ref{general optimality}, and states that Program \eqref{eq:conic program} for \emph{any} positive symmetric function  performs PCA, thus providing a broad class of equivalent formulations for PCA.   
\begin{corollary}\label{conic optimality} \textbf{\emph{(Positive Symmetric Functions)}}
Let $\{w_q\}_{q=0}^p$ be a set of nonnegative coefficients with at least one $w_q\neq 0$, and let $f_{g_w}$ 
be the function defined in (\ref{conic}). Then the following statements hold true: 
\begin{itemize}
\item[i) ] $\widetilde{X}\in\mathbb{R}^{n\times p}$ is a global maximiser of Program~(\ref{eq:conic program}) if and only if there exists a $p$-leading right singular factor $V_p\in\mathbb{R}^{n\times p}$ of $A$ such that $\range(\widetilde{X})=\range(V_p)$. 
\item[ii) ] Program~(\ref{eq:conic program}) does not have any spurious local maximisers, namely any local maximiser of Program~(\ref{eq:conic program}) is also a global maximiser, \edit{and all other stationary points are strict saddle points.} 
\end{itemize}
\end{corollary}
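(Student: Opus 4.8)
The plan is to observe that, restricted to the feasible set $\Stiefel(n,p)$, the objective $f_{g_w}$ is just a convex combination of the objectives $f_{s_q}$ from Theorem~\ref{general optimality}, and then to transfer that theorem. Indeed, on $\Stiefel(n,p)$ we have $X^*X=\I_p$, so $g_w(X^*X)=g_w(\I_p)=\sum_{q=0}^{p}w_q\binom{p}{q}=:c_w>0$ and $s_q(X^*X)=\binom{p}{q}$ are positive constants, and hence
\[
f_{g_w}(X)=\frac{g_w(X^*A^*AX)}{c_w}=\sum_{q=0}^{p}\alpha_q\,f_{s_q}(X),\qquad X\in\Stiefel(n,p),
\]
with weights $\alpha_q:=w_q\binom{p}{q}/c_w\ge 0$ summing to $1$ and with $f_{s_0}\equiv 1$. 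Since $s_0\equiv1$, I may assume that $w_q\neq 0$, i.e.\ $\alpha_q>0$, for some $q_0\in[p]$ (otherwise $f_{g_w}$ is constant on the Stiefel manifold and the statement is vacuous).

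For Part~i), Theorem~\ref{general optimality}(i) tells us that for every $q\in[p]$ the function $f_{s_q}$ attains its maximum over $\Stiefel(n,p)$ on exactly the \emph{same} set $\mathcal M:=\{X\in\Stiefel(n,p):\range(X)=\range(V_p)\ \text{for some $p$-leading right singular factor $V_p$ of $A$}\}$. Fixing $X^\star\in\mathcal M$, the convex-combination identity gives $f_{g_w}(X)\le\sum_q\alpha_q f_{s_q}(X^\star)=f_{g_w}(X^\star)$ for every $X\in\Stiefel(n,p)$, and equality forces $f_{s_{q_0}}(X)=f_{s_{q_0}}(X^\star)$, hence $X\in\mathcal M$ by Theorem~\ref{general optimality}(i); conversely every element of $\mathcal M$ attains the bound. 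This is Part~i).

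For Part~ii) a little more than Theorem~\ref{general optimality}(ii) is needed, since adding functions with no spurious local optima may create spurious ones; the point is that $g_w$ inherits the two structural properties of the $s_q$ on which the landscape analysis of Section~\ref{sec:proof of general optimality} rests. First, $g_w$ is a spectral function whose Euclidean gradient is positive definite on the cone of positive definite matrices: at a positive diagonal matrix $\diag(\lambda)$ the gradient of $s_q$ is again diagonal, with $i$-th entry the $(q-1)$-st elementary symmetric polynomial of the other $p-1$ entries of $\lambda$, which is strictly positive; hence $\nabla g_w(C)\succeq w_{q_0}\nabla s_{q_0}(C)\succ 0$ whenever $C\succ 0$. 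Second, $g_w$ is strictly increasing in each eigenvalue. Using the first property in the first-order optimality condition on $\Stiefel(n,p)$ shows, exactly as for the $s_q$, that at any non-degenerate stationary point $X$ of $f_{g_w}$ the subspace $\range(X)$ is invariant under $A^*A$, hence spanned by $p$ right singular vectors $v_{i_1},\dots,v_{i_p}$ of $A$; there are finitely many such subspaces, and on the one indexed by $\{i_1<\dots<i_p\}$ we have $f_{g_w}(X)=c_w^{-1}g_w(\diag(\sigma_{i_1}^2,\dots,\sigma_{i_p}^2))$. Using the second property: unless this index set equals $\{1,\dots,p\}$ (resp.\ $\{n-p+1,\dots,n\}$), a counting/majorisation argument produces $j\notin\{i_1,\dots,i_p\}$ and an index $k$ with $\sigma_j>\sigma_{i_k}$ (resp.\ $\sigma_j<\sigma_{i_k}$); rotating $v_{i_k}$ towards $v_j$ within their common span keeps $X$ on $\Stiefel(n,p)$ with $X^*A^*AX$ diagonal, and the second derivative of $f_{g_w}$ along this curve at $X$ equals $2c_w^{-1}(\sigma_j^2-\sigma_{i_k}^2)\,\partial_{\lambda_k}g_w(\sigma_{i_1}^2,\dots,\sigma_{i_p}^2)$, which is strictly positive (resp.\ strictly negative) because $\partial_{\lambda_k}g_w>0$. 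Thus every non-extreme stationary point has an ascent direction (and a descent direction) and is a strict saddle, whereas $\mathcal M$ (resp.\ the trailing subspace) is the unique local, and hence global, maximiser (resp.\ minimiser); a short separate argument, as in Section~\ref{sec:proof of general optimality}, disposes of the degenerate stationary points at which $\nabla g_w(X^*A^*AX)$ is singular because $AX$ is rank-deficient. Together with Part~i) this yields Part~ii).

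The hard part is Part~ii): translating the Stiefel first-order condition into $A^*A$-invariance of $\range(X)$, and especially the second-order computation along the exchanging rotations in the presence of repeated singular values of $A$ and of the rank-deficient stationary points. This is precisely the general landscape analysis of Section~\ref{sec:proof of general optimality}; it applies to $g_w$ without change because $g_w$ has the same spectral, coordinate-monotone, positive-gradient structure as each $s_q$, which is the sense in which Corollary~\ref{conic optimality} is an immediate consequence of Theorem~\ref{general optimality}.
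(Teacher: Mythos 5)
Your proposal is correct, and Part~i) follows the paper's own route: on $\Stiefel(n,p)$ the denominator $g_w(X^*X)$ is a positive constant, so $f_{g_w}$ is a nonnegative combination of the $f_{s_q}$, all of which attain their maximum on the same set by Theorem~\ref{general optimality}(i). For Part~ii) you diverge in a useful way. The paper simply asserts that, because the $w_q$ are nonnegative, "the claims in Theorem~\ref{general optimality} extend" to the combined objective; as you correctly point out, a sum of functions each without spurious local optima can in general acquire them, so this step implicitly relies on the fact that all the $f_{s_q}$ share the same stationary points and the same ascent directions there. You make this explicit by rerunning the Section~\ref{sec:proof of general optimality} landscape analysis directly on $g_w$: positive definiteness of $\nabla g_w$ on the positive definite cone turns the Stiefel first-order condition into $\range(\Gamma X)\subseteq\range(X)$ exactly as for a single $s_q$, and your rotation curve gives second derivative $2c_w^{-1}(\sigma_j^2-\sigma_{i_k}^2)\,\partial_{\lambda_k}g_w>0$ at any non-extreme nondegenerate stationary point, since some $\partial_{\lambda_k}e_{q_0}=e_{q_0-1}$ of positive arguments is strictly positive. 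This is the same computation the paper performs for each $s_q$ individually (compare \eqref{eq:hessian lemma statement general} and the displayed ascent-direction bound in Section~\ref{sec:proof of general optimality}), but applied to the aggregate $g_w$; what it buys is a self-contained justification of the paper's "extends" step. Your treatment of the degenerate stationary points (where $X^*\Gamma X$ is singular) is deferred to "a short separate argument", which matches the level of detail in the paper's own proofs, all of which assume $X_s^*\Gamma X_s\in\GL(p)$.
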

\begin{proof}
Without loss of generality, we can assume that $w_0=0$. Indeed, since $s_0=1$ is constant by definition, setting $w_0=0$ does not change the optima and stationary points of Program \eqref{eq:conic program}. Note that $X^*X=\I_p$ for every  $X$ feasible to Programs (\ref{eq:general program},\ref{eq:conic program}). Therefore Program \eqref{eq:conic program} has the same optima and stationary points as 
\begin{equation}
\arg \max \l\{ g_w(X^* A^* AX):\; X\in \Stiefel(n,p) \r\},
\label{eq:aux program -1}
\end{equation} 
which, \edit{after recalling \eqref{eq:psi-c}, has in turn} the same optima and stationary points as 
\begin{equation}
\arg \max \l\{ \sum_{q=0}^p w_q  \cdot \frac{s_q(X^*A^*AX)}{s_q(X^*X)}\; X\in \Stiefel(n,p) \r\}.
\label{eq:aux program}
\end{equation} 
Recall  Theorem \ref{general optimality} about Program \eqref{eq:general program} for every $q\in[p]$. Because the coefficients $\{w_q\}_q$ are nonnegative by assumption, the claims in Theorem \ref{general optimality} extend to Program  \eqref{eq:aux program} and in turn to Program  \eqref{eq:aux program -1} and then to Program \eqref{eq:conic program}. This completes the proof of Corollary \ref{conic optimality}.
\end{proof}

\noindent In conclusion, this paper introduced a large family of equivalent interpretations of PCA which \edit{can all be solved to global optimality in polynomial time}. One member of this family is an unconstrained formulation of PCA that might lead in the future to developing new algorithms and techniques for structured PCA.

\section{Proof of Theorem \ref{det optimality}\label{details}}

We first begin with a change of variables. Let $A = U\Sigma V^*$ be the  SVD of $A$, where $U\in\mathbb{R}^{m\times m}$ and $V\in\mathbb{R}^{n\times n}$ are orthonormal matrices, and the diagonal matrix $\Sigma\in\mathbb{R}^{m\times n}$ is formed by the singular values of $A$ \edit{in nonincreasing order, denoted by} $\sigma_1 \ge \sigma_2 \ge \cdots$.  Let us set $\Gamma := \Sigma^*\Sigma \in\mathbb{R}^{n\times n}$ for short and note that 
\begin{equation}
\Gamma = 
\mbox{diag}(\gamma), 
\qquad \mbox{where } 
\gamma = \l[
\begin{array}{ccccc}
\sigma_1^2 & \cdots & \sigma_r^2 & 0 & \cdots
\end{array}
\r]^* \in\mathbb{R}^n,
\label{eq:defn-Gamma}
\end{equation}
where  $\mbox{diag}(\gamma)$  shapes the vector $\gamma$ into a diagonal matrix, and $r=\mbox{rank}(A)$ is the rank of $A$, namely, the number of positive singular values of $A$. Under the change of variables \edit{from} $X$ to $Y=V^*X$, Program (\ref{pca by det}) is  equivalent to 
\begin{equation}
\arg\max \l\{ \frac{\det(Y^* \Gamma Y) }{\det(Y^*Y)} \,:\, Y\in \mathbb{R}^{n\times p}_p\r\}. 
\label{det simplified}
\end{equation}
Without loss of generality, we will therefore assume that $A^*A=\Gamma$ in \eqref{gdet}, namely, we henceforth set
\begin{equation}
f_{\det}(X) = \frac{\det(X^* \Gamma X)}{\det(X^* X)}. \label{gdet new}
\end{equation} 
We will prove Theorem \ref{det optimality} by studying the stationary points of Program (\ref{pca by det}). This program is unconstrained and therefore a stationary point $X_s\in \mathbb{R}^{n\times p}_p$ of Program (\ref{pca by det}) is characterised by $\nabla f_{\det} (X_s)=0$. \edit{In light of the invariance in \eqref{invariance of det}, we can also  assume without loss of generality that $X_s \in \operatorname{St}(n,p)$. A stationary point $X_s \in \operatorname{St}(n,p)$ of Program \eqref{pca by det} thus satisfies }
\begin{equation}
\nabla f_{\det}(X_s) = 2\Gamma X_s \nabla \det(X_s^* \Gamma X_s) - 2\det(X_s^* \Gamma X_s) \cdot X_s\nabla \det(X_s^* X_s) = 0. 
\label{eq:st-cnd-first-proof}
\end{equation}
Note that \edit{the} determinant is a \emph{spectral function}, \edit{namely, it} only depends on the eigenvalues of its input matrix. More precisely, \edit{for a matrix $Z\in \mathbb{R}^{p\times p}$, it holds that}
\begin{equation}
\det(Z) = e_p(\lambda_Z) := \prod_{i=1}^p \lambda_{Z,i}, 
\label{eq:def of ep}
\end{equation} 
where $\lambda_{Z,i}$ is the $i$-th eigenvalue of $Z$ and the vector $\lambda_Z=[\lambda_{Z,1},\cdots,\lambda_{Z,p}]$ contains the eigenvalues of $Z$.  The derivative of a spectral function is well-known. To be specific, let $Z=W_Z \cdot \text{diag}(\lambda_Z) \cdot W_Z^*$ be the eigen-decomposition of $Z$, where $W_Z\in \Orth(p)$ is an orthonormal matrix and, as before, $\text{diag}(\lambda_Z)\in \RN^{p\times p}$ is the diagonal matrix formed by the vector $\lambda_Z$. For a spectral function $\phi:\Sym(p)\rightarrow \RN$, there exists a \emph{symmetric function} $\psi:\RN^p\rightarrow\RN$  such that 
\begin{equation}
\phi(Z) = \phi (\text{diag}(\lambda_Z)) = \psi(\lambda_z),
\label{eq:symm fcn}
\end{equation}
where we recall that a symmetric function \edit{is a function that} remains invariant  after changing the order of its arguments. We then have from \cite{lewis2002quadratic} that 
\begin{equation}
\nabla \phi (Z) = W_Z \cdot \mbox{diag}(\nabla \psi(\lambda_Z)) \cdot W_Z^*.
\label{eq:grad of spec}
\end{equation}
In our case, with $\phi=e_p$ and when $Z\in \GL(p)$, namely, when $Z$ is non-singular, we have that 
\begin{align}
\nabla \det(Z) & = W_Z \cdot \text{diag}(\nabla e_p(\lambda_Z)) \cdot W_Z^*
\qquad \text{(see \eqref{eq:grad of spec})}
\nonumber\\
& = W_Z \cdot \text{diag}\l( e_p(\lambda_Z) 
\l[
\begin{array}{ccc}
\frac{1}{\lambda_{Z,1}} & \cdots & \frac{1}{\lambda_{Z,p}}
\end{array}
\r]^* 
 \r) \cdot W_Z^*
 \qquad \text{(see \eqref{eq:def of ep})} \nonumber\\
 & =  \det(Z)  W_Z \cdot \text{diag}\l( 
\l[
\begin{array}{ccc}
\frac{1}{\lambda_{Z,1}} & \cdots & \frac{1}{\lambda_{Z,p}}
\end{array}
\r]^* 
 \r) \cdot W_Z^*.\qquad \text{(see \eqref{eq:def of ep})}
 \label{eq:grad-det-formula}
\end{align}
\edit{Returning to the proof and recalling that  $X_s \in \operatorname{St}(n,p)$,  \eqref{eq:grad-det-formula} allows us to write that 
\begin{align}
\nabla \det (X_s^* X_s ) = I_p. 
\label{eq:exp-nabla-det}
\end{align}
Likewise, after recalling that $X_s^*\Gamma X_s \in \GL(p)$ by assumption, it follows from \eqref{eq:grad-det-formula} that 
\begin{align}
\nabla \det(X_s^* \Gamma X_s ) = \det(X_s^* \Gamma X_s)\cdot (X_s^*\Gamma X_s)^{-1}. 
\label{eq:exp-nabla-det2}
\end{align}
Substituting (\ref{eq:exp-nabla-det},\ref{eq:exp-nabla-det2}), we find that \eqref{eq:st-cnd-first-proof} 
holds if and only if $\Gamma X_s = X_s X_s^*\Gamma X_s$, and since $X_s X_s^*$ is the projection into 
$\range(X_s)$, this is true if and only if $\range(\Gamma X_s)\subseteq\range(X_s)$. From this it follows that }
$X_s\in \operatorname{St}(n,p)$ \edit{such that} $X_s^*\Gamma X_s\in \GL(p)$ is a stationary point of Program~\eqref{pca by det}, if and only if 
\begin{equation}
\range(X_s)=\range(\Gamma X_s).
\label{eq:st cnd ranges}
\end{equation} 
The following result, proved in Appendix \ref{sec:proof of lemma stationary}, characterises the stationary points of Program \eqref{pca by det}. That is, the next result characterises matrices $X_s\in \Stiefel(n,p)$ that satisfy \eqref{eq:st cnd ranges}. 
\begin{lemma}
\label{lem:stationary}For a singular value $\sigma_{j}$ of $A$, let $n_{j}$
denote the multiplicity of $\sigma_{j}$ \edit{in $A$}. Suppose that $X_s\in\Stiefel(n,p)$ satisfies $X_s^*\Gamma X_s \in \GL(p)$ and $\range( X_s) =  \range( \Gamma X_s)$. 
Then there
exists an index set $J\subset[n]$ such that 
\begin{enumerate}
\item $\{\sigma_{i}\}_{i\in J}$
are distinct and positive, and
\item for every $i\notin J$, the rows of $X_{s}$ corresponding to $\sigma_{i}$ are zero, and 
\item for every $i\in J$, the rows of $X_{s}$ corresponding
to $\sigma_{i}$ span a $\text{dim}(\sigma_{i})$-dimensional subspace
of $\mathbb{R}^{p}$, and 
\item for every distinct pair $\{i,i'\}\subset J$, the corresponding subspaces are orthogonal, and finally
\item $\sum_{i\in J}\dim(\sigma_{i})=p$. 
\end{enumerate}
Above, \edit{for every $i\in J$}, we set 
\[
\dim\left(\sigma_{i}\right)=\max\left[p-\sum_{i'\ne i}n_{i'},1\right].
\] 
\end{lemma}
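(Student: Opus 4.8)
The plan is to read the two hypotheses of the lemma as a single geometric statement about the subspace $S:=\range(X_s)$, and then to normalise $X_s$ so that the block structure of its rows becomes transparent.

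\textbf{Reducing to an invariant subspace.} First I would observe that $\range(X_s)=\range(\Gamma X_s)$ is exactly the assertion that $S$ is invariant under $\Gamma$, since $\Gamma S=\range(\Gamma X_s)=\range(X_s)=S$; and that $X_s^*\Gamma X_s\in\GL(p)$ is the matrix, in the orthonormal basis $X_s$, of the map $\Gamma|_S:S\to S$, so its nonsingularity says $\Gamma$ acts invertibly on $S$, i.e.\ $S\cap\ker\Gamma=\{0\}$. Thus the hypotheses amount to: $S$ is a $p$-dimensional, $\Gamma$-invariant subspace of $\mathbb{R}^n$ meeting $\ker\Gamma$ trivially.

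\textbf{Spectral decomposition and normalisation.} Since $\Gamma=\diag(\gamma)$ is symmetric, its eigenspaces are the coordinate subspaces $E_\sigma:=\operatorname{span}\{e_j:\gamma_j=\sigma^2\}$, one for each distinct singular value $\sigma$ of $A$ (including $\sigma=0$ when $r:=\rank(A)<n$), with $\dim E_\sigma$ equal to the multiplicity of $\sigma$; and every $\Gamma$-invariant subspace is the orthogonal direct sum of its intersections with these eigenspaces. Hence $S=\bigoplus_\sigma(S\cap E_\sigma)$, with $S\cap E_0=S\cap\ker\Gamma=\{0\}$ by the nondegeneracy, and I would take $J\subset[n]$ to consist of one representative $i$ of each distinct positive singular value $\sigma_i$ with $S\cap E_{\sigma_i}\neq\{0\}$; this is item~1. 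Next, using the invariance \eqref{invariance of det}, I would replace $X_s$ by $X_sQ$ for $Q\in\Orth(p)$ chosen to diagonalise $X_s^*\Gamma X_s$; this alters neither $S$, nor the dimensions of the row-spans in question, nor their pairwise orthogonality (it is a common orthogonal change of coordinates on $\mathbb{R}^p$), and after it $\Gamma X_s=X_sD$ with $D$ diagonal, so every column of $X_s$ is a $\Gamma$-eigenvector, hence supported on a single coordinate block, with exactly $d_i:=\dim(S\cap E_{\sigma_i})$ of the columns landing in the block of $\sigma_i$ (these columns forming an orthonormal basis of $S\cap E_{\sigma_i}$). Items~2--5 then follow: for $i\notin J$, no column is supported on the block of $\sigma_i$, so those rows of $X_s$ vanish (item~2); for $i\in J$, the rows in the block of $\sigma_i$ form a submatrix whose only nonzero columns occupy a fixed set of $d_i$ indices, so their span is a $d_i$-dimensional coordinate subspace of $\mathbb{R}^p$ (item~3), and these index sets are pairwise disjoint across $i\in J$ (item~4); and $\sum_{i\in J}d_i=\dim S=p$ (item~5).

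\textbf{Identifying the dimensions.} It remains to identify $d_i=\dim(S\cap E_{\sigma_i})$ with the asserted $\dim(\sigma_i)$. One direction is a counting bound: $d_i\le n_i$ (the block size) for every $i\in J$, together with $\sum_{i\in J}d_i=p$, gives $d_i\ge p-\sum_{i'\in J,\,i'\neq i}n_{i'}$, and $d_i\ge1$ because $i\in J$, so $d_i\ge\max[p-\sum_{i'\neq i}n_{i'},1]$. Establishing the exact value of $d_i$ — i.e.\ showing that at a stationary point the $p$ columns are forced to spread across the eigenspaces precisely as the multiplicities dictate — is the one genuinely delicate point and, I expect, the main obstacle; the rest is the spectral theorem for invariant subspaces of a symmetric matrix plus the diagonalising normalisation. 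Combining the three steps yields the lemma.
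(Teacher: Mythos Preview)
Your invariant-subspace reading of the hypotheses is sound and cleaner than the paper's more computational route through the eigenstructure of $B=X_s^*\Gamma X_s$; both yield the orthogonal block decomposition (items~1, 2, 4, 5, with block dimensions $d_i:=\dim(S\cap E_{\sigma_i})$), and your counting bound $d_i\ge\dim(\sigma_i)$ is correct. But the step you flag as ``the main obstacle'' is not merely delicate---it cannot be completed, because the asserted equality $d_i=\dim(\sigma_i)$ is false in general. Take $n=5$, $p=3$, $\Gamma=\diag(4,4,1,1,1)$ and let $X_s\in\Stiefel(5,3)$ have columns $e_1,e_3,e_4$. Then $X_s^*\Gamma X_s=\diag(4,1,1)\in\GL(3)$ and $\range(X_s)=\range(\Gamma X_s)$, so the hypotheses hold; yet the rows of $X_s$ corresponding to the singular value $1$ (rows $3,4,5$) span a $2$-dimensional subspace of $\RN^3$, whereas with $J=\{1,3\}$, $n_1=2$, $n_3=3$ the formula gives $\dim(\sigma_3)=\max[3-2,1]=1$.

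The paper's own argument stumbles at exactly this point. It chooses an index set $K_0$ of size $p$ containing precisely $\dim(\sigma_{i_0})$ copies of $\sigma_{i_0}$ and asserts that $X_s[K_0,:]\,B=\Gamma[K_0,K_0]\,X_s[K_0,:]$ furnishes the eigendecomposition of $B$; but nothing forces $X_s[K_0,:]$ to be invertible, and in the counterexample any such $K_0$ must contain both indices $1$ and $2$, so $X_s[K_0,:]$ has a zero row. Fortunately the downstream uses (Lemma~\ref{lem:hessian calculated} and the proof of Theorem~\ref{det optimality}) never invoke the explicit formula---only the block structure and the identity $X_s^*\Gamma X_s=\diag(\widetilde\gamma_1)$---so item~3 should simply read ``span a $d_i$-dimensional subspace'' with the $d_i$ determined a~posteriori; your argument then proves that corrected statement in full.
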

\noindent Based on the characterisation of stationary points in Lemma \ref{lem:stationary}, we next calculate the Hessian of $f_{\det}$ at a stationary point of Program \eqref{pca by det}, which will later help us determine the stability of these stationary points. The following result is \edit{in fact more general, see Appendix \ref{sec:proof of stability lemma} for the proof. }
\begin{lemma} \label{lem:hessian calculated}
Consider a differentiable spectral function $\phi:\Sym(p)\rightarrow \mathbb{R}$ and  the associated symmetric function $\psi:\RN^p\rightarrow\RN$, see (\ref{eq:symm fcn}). Consider also the function 
\begin{eqnarray}
f_{\phi}:\RN^{n\times p}_p&\rightarrow&\RN\nonumber\\
X&\mapsto& \frac{\phi(X^* \Gamma X)}{\phi (X^*X)}.\label{eq:def of g}
\end{eqnarray}
Consider lastly $X_s\in \Stiefel(n,p)$ such that $\range(X_s)=\range(\Gamma X_s)$ and the corresponding index set $J=\{i_1,i_2,\cdots \}\subseteq[n]$ in Lemma \ref{lem:stationary}. Then we can assume without loss of generality that \edit{$X_s$ is block-diagonal}. Under this assumption, it holds that 
\begin{equation}
X_s^*\Gamma X_s = \diag(\widetilde{\gamma}_1),
\label{eq:simp assumption lem statement}
\end{equation}
where
\begin{equation}
\widetilde{\gamma}_1 := \l[
\begin{array}{ccc} 
\overset{\dim(\sigma_{i_1})}{\overbrace{\sigma_{i_1}^2 \cdots \sigma_{i_1}^2}} 
& \overset{\dim(\sigma_{i_2})}{\overbrace{\sigma_{i_2}^2 \cdots \sigma_{i_2}^2}} 
& \cdots 
\end{array}
\r]^* \in \RN^p.
\label{eq:def of gamma tilde}
\end{equation}
Moreover, let $K\subset[n]$ denote the index set corresponding to the nonzero rows of $X_s$. Then it holds that 
\begin{equation}
\nabla^2 f_\phi(X_s)[\Delta,\Delta] =  \frac{1}{\phi(\I_p)}  \sum_{i\in K^C} \sum_{j=1}^p   \l( \sigma_i^2 \partial_j \psi(\widetilde{\gamma}_1) - f_\phi(X_s) \partial_1 \psi(1_p) \r)  \Delta_{i,j}^2,
\label{eq:hessian lemma statement}
\end{equation}
for \edit{every}  $\Delta\in\RN^{n\times p}$ that is zero on the rows indexed by $K$.
Above, the bilinear operator $\nabla^2 f_\phi(X_s):\RN^{n\times p}\times \RN^{n\times p}\rightarrow \RN$ is the Hessian of $f_\phi$ at $X_s$. Also, \edit{$K^C$ is the complement of the set $K$,}  $\partial_i \psi(\widetilde{\gamma}_1)$ is the $i$-th entry of the gradient vector $\nabla \psi(\widetilde{\gamma}_1)\in\RN^p$, and $1_p\in\RN^p$ is the vector of all ones. 
\end{lemma}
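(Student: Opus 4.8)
The plan is to prove Lemma \ref{lem:hessian calculated} in three moves: first put $X_s$ into a block-diagonal normal form and read off $X_s^*\Gamma X_s$ and the nonzero-row set $K$; then expand $f_\phi$ to second order along a direction $\Delta$ that, by hypothesis, is supported exactly on the zero rows of $X_s$, which collapses the computation to a one-parameter Taylor expansion; and finally collect terms. For the normal form, I would group the rows of $X_s$ by the distinct singular values of $A$ and invoke Lemma \ref{lem:stationary}: for each $i\in J$ the rows attached to $\sigma_i$ span a $\dim(\sigma_i)$-dimensional subspace $S_i\subseteq\RN^p$, the $S_i$ are pairwise orthogonal, $\bigoplus_{i\in J}S_i=\RN^p$, and the rows attached to indices outside $J$ vanish. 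Choosing $\Theta\in\Orth(p)$ whose columns, read in consecutive blocks of sizes $\dim(\sigma_{i_1}),\dim(\sigma_{i_2}),\dots$, are orthonormal bases of $S_{i_1},S_{i_2},\dots$, makes $X_s\Theta$ block-diagonal for the row partition by eigenspaces of $\Gamma$ and this column partition (the off-diagonal blocks vanish because $S_i\perp S_{i'}$); since $X_s^*X_s=\I_p$, each diagonal block has orthonormal columns, so a left multiplication by a block-orthogonal $O$ commuting with $\Gamma$, plus a further block-orthogonal adjustment on the right, turns each diagonal block into the matrix whose leading $\dim(\sigma_i)$ rows are the identity and whose remaining rows vanish. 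Because $\phi$ is spectral, $f_\phi$ is invariant under $X\mapsto OX\Theta$ for all $\Theta\in\Orth(p)$ and all orthogonal $O$ with $O^*\Gamma O=\Gamma$, and since this map is linear its Hessian is transported the same way; hence it suffices to prove the formula for this block-diagonal representative, which we keep calling $X_s$. For it, $X_s^*X_s=\I_p$ and, $\Gamma$ acting as $\sigma_i^2\I_{n_i}$ on the $i$-th eigenspace, $X_s^*\Gamma X_s$ is block-diagonal with $i$-th block $\sigma_i^2\I_{\dim(\sigma_i)}$, i.e.\ $X_s^*\Gamma X_s=\diag(\widetilde{\gamma}_1)$ with $\widetilde{\gamma}_1$ as in \eqref{eq:def of gamma tilde}; in particular $\phi(X_s^*\Gamma X_s)/\phi(\I_p)=f_\phi(X_s)$, and $K$ is the union over $i\in J$ of the leading $\dim(\sigma_i)$ rows of each eigenspace block.

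For the Hessian, let $\Delta\in\RN^{n\times p}$ vanish on the rows in $K$, so $\Delta$ and $X_s$ have disjoint row supports; since $\Gamma$ is diagonal, every mixed product then vanishes, $X_s^*\Delta=0$ and $X_s^*\Gamma\Delta=0$ together with their transposes. Consequently, \emph{exactly}, $(X_s+t\Delta)^*(X_s+t\Delta)=\I_p+t^2\Delta^*\Delta$ and $(X_s+t\Delta)^*\Gamma(X_s+t\Delta)=\diag(\widetilde{\gamma}_1)+t^2\Delta^*\Gamma\Delta$, so that
\[
f_\phi(X_s+t\Delta)=\frac{\phi\bigl(\diag(\widetilde{\gamma}_1)+t^2\Delta^*\Gamma\Delta\bigr)}{\phi\bigl(\I_p+t^2\Delta^*\Delta\bigr)}.
\]
I would expand numerator and denominator to first order in their order-$t^2$ arguments via \eqref{eq:grad of spec}, using $\nabla\phi(\diag(\widetilde{\gamma}_1))=\diag(\nabla\psi(\widetilde{\gamma}_1))$ and $\nabla\phi(\I_p)=\partial_1\psi(1_p)\,\I_p$ (the latter because $1_p$ has equal coordinates and $\psi$ is symmetric); this uses only differentiability of $\phi$ at these two points, so the coefficient of $t^2$ in $t\mapsto f_\phi(X_s+t\Delta)$, which is how $\nabla^2 f_\phi(X_s)[\Delta,\Delta]$ is to be understood here, is well-defined even if $\phi$ is not twice differentiable. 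The quotient rule then yields, as that coefficient (the $t$-term vanishing, consistent with stationarity of $X_s$),
\[
\nabla^2 f_\phi(X_s)[\Delta,\Delta]=\frac{1}{\phi(\I_p)}\Bigl(\bigl\langle\diag(\nabla\psi(\widetilde{\gamma}_1)),\,\Delta^*\Gamma\Delta\bigr\rangle-f_\phi(X_s)\,\partial_1\psi(1_p)\,\|\Delta\|_F^2\Bigr),
\]
and then writing $\bigl\langle\diag(\nabla\psi(\widetilde{\gamma}_1)),\Delta^*\Gamma\Delta\bigr\rangle=\sum_{j=1}^p\partial_j\psi(\widetilde{\gamma}_1)\sum_i\sigma_i^2\Delta_{i,j}^2$ and $\|\Delta\|_F^2=\sum_i\sum_j\Delta_{i,j}^2$, and using that $\Delta$ is supported on $K^C$, recovers \eqref{eq:hessian lemma statement}.

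The main obstacle is the first move: one has to check carefully that $f_\phi$, hence its Hessian evaluated on the directions of interest, is genuinely invariant under the left action of the orthogonal matrices commuting with $\Gamma$ and the right action of $\Orth(p)$, and that the orthonormal bases supplied by Lemma \ref{lem:stationary} can be assembled into such transformations that produce exactly the claimed normal form together with $\widetilde{\gamma}_1$ and $K$. A secondary wrinkle is that \eqref{eq:grad of spec} gets applied at the eigenvalue-degenerate matrix $\diag(\widetilde{\gamma}_1)$ whenever some $\dim(\sigma_i)>1$; this is legitimate because the symmetry of $\psi$ forces the coordinates of $\nabla\psi$ indexed by equal eigenvalues to coincide, so $\diag(\nabla\psi(\widetilde{\gamma}_1))$ is a bona fide value of $\nabla\phi$ there. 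Once the normal form is in hand, the disjointness of the row supports of $X_s$ and $\Delta$ eliminates every cross term and reduces the Hessian computation to a short one-variable Taylor expansion.
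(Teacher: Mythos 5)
Your proposal is correct and follows essentially the same route as the paper's proof in Appendix B: reduce to the block-diagonal normal form supplied by Lemma \ref{lem:stationary}, observe that for $\Delta$ supported on the zero rows of $X_s$ all cross terms vanish so the perturbed arguments become $\diag(\widetilde{\gamma}_1)+\Delta^*\Gamma\Delta$ and $\I_p+\Delta^*\Delta$, then expand $\phi$ to first order via \eqref{eq:grad of spec} and apply the quotient rule. The only differences are cosmetic (you kill the cross terms by disjoint row supports and diagonality of $\Gamma$ rather than via $\Gamma X_s=X_sB$ and $X_s^*\Delta=0$, and you add an unneeded left orthogonal normalisation), and you helpfully flag the degenerate-eigenvalue subtlety in applying \eqref{eq:grad of spec} that the paper glosses over.
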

\noindent In particular, when $\phi=\det$ and\edit{, consequently, $\psi = e_p$ in Lemma \ref{lem:hessian calculated}}, let us simplify the expression for \edit{the} Hessian in \eqref{eq:hessian lemma statement} by noting that 
\begin{equation*}
\phi(\I_p) = \det(\I_p) = 1,
\end{equation*}
\begin{align*}
\partial_j \psi(\widetilde{\gamma}_1)  & = 
\partial_j e_p(\widetilde{\gamma}_1) \nonumber\\
& = \prod_{i\ne j} \widetilde{\gamma}_{1,i} \nonumber\\
& = \frac{\prod_{i=1}^p \widetilde{\gamma}_{1,i}}{\widetilde{\gamma}_{1,j}} \nonumber\\
& = \frac{\det(\diag(\widetilde{\gamma}_1
))}{\widetilde{\gamma}_{1,j}} \nonumber\\
& = \frac{\det(X_s^* \Gamma X_s)}{\widetilde{\gamma}_{1,j} \det(X_s^* X_s)},
\qquad (\edit{\Gamma = \text{diac}(\widetilde{\gamma}_1) \text{ and }} X_s \in \Stiefel(n,p)) \nonumber\\
& = \frac{f_{\det}(X_s)}{\widetilde{\gamma}_{1,j}},
\qquad \text{(see \eqref{eq:def of g})}.
\end{align*}
\begin{equation}
\partial_1 \psi(1_p) = \partial_1 e_p(1_p) = 1.
\label{eq:value of g at Xs}
\end{equation}
\edit{For any $\Delta\in \mathbb{R}^{n\times p}$ that is zero on the rows indexed by $K$,} substituting \edit{the above} values \edit{back into} \eqref{eq:hessian lemma statement} \edit{in Lemma \ref{lem:hessian calculated} with $\phi = \det$} yields that 
\begin{align}
\nabla^2 f_{\det}(X_s)[\Delta,\Delta] & 
 =  f_{\det}(X_s)  \sum_{i\in K^C}  \sum_{j=1}^p \l( \frac{\sigma_i^2}{\widetilde{\gamma}_{1,j}} -1  \r)  \Delta_{i,j}^2 .
 \label{eq:hessian-at-x}
\end{align}
If $\{\sigma_i\}_{i\in J}$ are  \emph{not} \edit{the unique numbers in the} $p$ leading singular values of $A$, then there exists $\Delta\in\RN^{n\times p}$ such that $\nabla^2 f_{\det}(X_s)[\Delta,\Delta]> 0$, namely, $\Delta$ is an ascent direction at $X_s$.  \edit{Indeed, let $\sigma_{i_0}$ with $i_0\in [n]$ be one of the $p$ leading singular values of $A$, not listed in $\{\sigma_i\}_{i\in J}$. Then it holds that 
\begin{align}
\sigma_{i_0}^2 > \min_{i\in J} \sigma_i^2 = \min_{j\in [p]} \widetilde{\gamma}_{1,j} =: \widetilde{\gamma}_{1,j_0},
\qquad \text{(see \eqref{eq:def of gamma tilde})}
\label{eq:wo-sepc-gap-req}
\end{align}
and,\footnote{\edit{In \eqref{eq:wo-sepc-gap-req}, the index $j_0$ might not be uniquely defined.}} moreover, 
\begin{align}
\frac{\sigma_{i_0}^2}{\widetilde{\gamma}_{1,j_0}} \ge \frac{\sigma_{p}^2}{\sigma_{p+1}^2}.
\label{eq:low-bnd-on-ratio}
\end{align}
Let $\Delta\in \RN^{n\times p}$ be such that $\Delta_{i_0,j_0}$ is its only nonzero entry and note that this choice of $\Delta$ is indeed zero on the rows indexed by $K$.  With this choice of $\Delta$ in \eqref{eq:hessian-at-x}, we find that 
\begin{align}
\nabla^2 f_{\det}(X_s)[\Delta,\Delta] & = f_{\det}(X_s) \l( \frac{\sigma_{i_0}^2}{\widetilde{\gamma}_{1,j_0}}   -1  \r) \Delta_{i_0,j_0}^2 >0.
\qquad \text{(see \eqref{eq:wo-sepc-gap-req})}
\end{align}
That is, if $\{\sigma_i\}_{i\in J}$ are \emph{not} the unique numbers in the $p$ leading singular values of $A$, then there exists an ascent direction at $X_s$. Moreover, if there is a nontrivial spectral gap $\sigma_p > \sigma_{p+1}$, then it also holds that 
\begin{align}
\nabla^2 f_{\det}(X_s)[\Delta,\Delta]& = f_{\det}(X_s) \l( \frac{\sigma_{i_0}^2}{\widetilde{\gamma}_{1,j_0}}   -1  \r) \Delta_{i_0,j_0}^2 
\nonumber\\
& \ge f_{\det}(X_s) \l( \frac{\sigma_{p}^2}{\sigma_{p+1}^2}  -1  \r) \Delta_{i_0,j_0}^2
\qquad \text{(see \eqref{eq:low-bnd-on-ratio})} \nonumber\\
& = f_{\det}(X_s) \l( \frac{\sigma_{p}^2}{\sigma_{p+1}^2}  -1  \r) \|\Delta\|_F^2,
\label{eq:det-spec-gap-result}
\end{align}
where the last line uses the fact that $\Delta_{i_0,j_0}$ is the only nonzero entry of $\Delta$ above. Likewise, we can establish that if $\{\sigma_i\}_{i\in J}$ are \emph{not} the unique numbers in the $p$ trailing singular values of $A$, then there exists a descent direction at $X_s$. We conclude that if $\{\sigma_i\}_{i\in J}$ are neither the unique numbers in the $p$ leading nor the $p$ trailing singular values of $A$, then $X_s$ is a strict saddle point (because it has both an ascent and a descent direction). 
}

  \edit{On the other hand, if $\{\sigma_i\}_{i\in J}$ are the unique numbers in the $p$  leading singular values of $A$, then all corresponding stationary points take the same objective value $f_{\det}$, which must (globally) maximise the (continuous) objective $f_{\det}$ on the compact set $\operatorname{St}(n,p)$. That is, every such stationary point $X_s$ is in fact a global maximiser of Program~\eqref{pca by det}. Likewise, if $\{\sigma_i\}_{i\in J}$ are the unique numbers in the $p$  trailing singular values of $A$, then all corresponding stationary points are global minimizers.}
This completes the proof of Theorem~\ref{det optimality}.

\edit{The advantage of Lemma \ref{lem:hessian calculated} above is that it gives an explicit expression for the Hessian of $f_\phi$, which will be used to prove Theorem \ref{general optimality}.  For the sake of completeness, however, let us show how Lemma \ref{lem:hessian calculated} can be replaced with a simpler argument here, described next.} In light of Lemma \ref{lem:stationary} and, if necessary, after a change of basis in \eqref{det simplified}, we can without loss of generality assume that a stationary point $X_s$ of Program \eqref{pca by det} is of the form 
\begin{equation}
X_s = \l[
\begin{array}{ccc}
c_{i_1}  & \cdots & c_{i_p}
\end{array}
\r] \in \Stiefel(n,p),
\end{equation}
where $\sigma_{i_1}\ge \cdots \ge \sigma_{i_p}$, and $c_i\in\RN^n$ is the $i$-th canonical vector that takes one at index $i$ and  zero elsewhere. 
If $X_s$ does not correspond to $p$ leading singular values of $A$, then there exists $i_0< i_1$ such that \edit{$\sigma_{i_0}> \min_{i_j} \sigma_{i_j}$. To simplify the presentation below, let us assume that in fact $\sigma_{i_0}> \sigma_{i_1}$. } Now consider the trajectory $\theta\rightarrow X(\theta)$ specified as 
\begin{equation}
X(\theta) =  \l[
\begin{array}{cccc}
c_{i_0} & c_{i_1}  & \cdots & c_{i_p}
\end{array}
\r]
\l[
\begin{array}{ccc}
\cos \theta & \sin \theta &  \\
-\sin \theta  & \cos \theta & \\
& & \I_{p-1}
\end{array}
\r]
\l[
\begin{array}{c}
0 \\
\I_{p}
\end{array}
\r]
\in \Stiefel(n,p)
,
\end{equation}
where the empty blocks in the square matrix above are filled with zeros. It is easy to verify that 
\begin{equation}
\frac{d^2 f_{\det}}{d\theta^2}(0) = \frac{2(\sigma_{i_0}^2-\sigma_{i_1}^2)}{\sigma_{i_1}^2} >0.
\end{equation}
That is, there exists an ascent direction at any stationary point that does not correspond to $p$ leading singular values of $A$. Likewise, one can verify that there exists a descent direction at any stationary point that does not correspond to $p$ trailing singular values of $A$, and now the rest of the proof of Theorem \ref{det optimality} follows as before.

\section{Proof of Theorem \ref{general optimality} \label{sec:proof of general optimality}}

The proof strategy is similar to that of Theorem \ref{det optimality} but with  some technical subtleties.  Without loss of generality, we assume again that $A^*A=\Gamma$ in \eqref{fq}, namely, we assume henceforth that
\begin{equation}
f_{s_q}(X) = \frac{s_q(X^*\Gamma X)}{s_q(X^*X)}. 
\label{eq:fsq-simplified}
\end{equation}
As with Theorem \ref{det optimality}, we will prove Theorem \ref{general optimality} by studying the stationary points of Program \eqref{eq:general program}, \edit{which we rewrite in the equivalent form
\begin{align}
\max_{X\in \RN^{n\times p}} \min_{\Lambda\in \RN^{p\times p} } f_{s_q}(X) + \langle  X^*X-I_p , \Lambda\rangle.
\label{eq:general program-rewritten}
\end{align} 
Therefore,  $X_s\in\Stiefel(n,p)$ is a stationary point of Programs (\ref{eq:general program},\ref{eq:general program-rewritten}), if and only if there exists $\Lambda_s\in  \RN^{p\times p}$ such that 
\begin{align}
\nabla f_{s_q}(X_s) + X_s ( \Lambda_s + \Lambda_s^*) = 0,
\end{align}
namely, when $\nabla f_{s_q}(X_s)$ belongs to the normal space to the Stiefel manifold at $X_s$ \cite{edelman1998geometry}. Without loss of generality, let us assume that $\Lambda_s = \Lambda_s^*$, so that the above condition simplifies to
\begin{align}
\nabla f_{s_q}(X_s) + 2 X_s  \Lambda_s = 0.
\label{eq:st cnd in general}
\end{align} In particular, since $X_s\in \operatorname{St}(n,p)$, we can multiply both sides above by $X_s^*$ and solve for $\Lambda_s$ above to obtain that 
\begin{align}
\Lambda_s = - \frac{1}{2} X_s^* \nabla f_{s_q}(X_s).
\label{eq:exp-for-lambda-s-thm2}
\end{align}}
\edit{Next, from \eqref{eq:fsq-simplified}, it follows that 
\begin{equation}
\nabla f_{s_q} (X_s) = \frac{2\Gamma X_s \nabla s_q(X_s^* \Gamma X_s)}{s_q(X_s^* X_s)} - \frac{2s_q(X_s^* \Gamma X_s) \cdot X_s\nabla s_q(X_s^* X_s)}{s_q(X_s^* X_s)^2}.
\label{eq:st cnd in general updated}
\end{equation}}Let us examine the above expression more carefully.  
For $Z\in\RN^{p\times p}$ with eigen-decomposition $Z=U_Z\diag(\lambda_Z)U_Z^*$, note that the symmetric function corresponding to $\phi = s_q$ is 
\begin{equation}
\psi(\lambda_Z)= e_q(\lambda_Z) := \sum_{1\le i_1 < \cdots < i_q \le p} \,\prod_{j=1}^q \lambda_{Z,i_j}.
\label{eq:def of eq}
\end{equation}
For a nonsingular matrix $Z\in\GL(p)$, it is then not difficult to verify that 
\begin{align}
\nabla e_q(\lambda_Z) & = \l[
\begin{array}{ccc}
e_{q-1}(\lambda_Z^1) & \cdots & e_{q-1}(\lambda_Z^p)
\end{array}
\r]^*,
\label{eq:grad of symm fcn general}
\end{align}
where $\lambda_Z^i\in\RN^{p-1}$ is formed from $\lambda_Z\in\RN^p$ by removing its $i$-th entry, namely $\lambda_{Z,i}$. Using \eqref{eq:grad of spec}, we immediately find that 
\begin{align}
\nabla s_q(Z)= W_Z
\diag\l(
 \l[
\begin{array}{ccc}
e_{q-1}(\lambda_Z^1) & \cdots & e_{q-1}(\lambda_Z^p)
\end{array}
\r]^* \r)
W_Z^* . 
\label{eq:exp-grad-general}
\end{align}
\edit{Recalling that $X_s\in \operatorname{St}(n,p)$ and using \eqref{eq:exp-grad-general}, we calculate the gradients involved in \eqref{eq:st cnd in general updated} as 
\begin{align}
\nabla s_q(X_s^* X_s)  & = { p-1 \choose q-1} X_s^* X_s 
\qquad \text{(see (\ref{eq:def of eq},\ref{eq:exp-grad-general}))}  \nonumber\\
& = { p-1 \choose q-1} I_p,
\qquad (X_s \in \operatorname{St}(n,p))
\label{eq:nabla-general-1}
\end{align}
\begin{align}
\nabla s_q(X_s^* \Gamma X_s) & = X_s^* \diag\l( 
\left [
\begin{array}{ccc}
e_{q-1}(\widetilde{\gamma}_1^1) & \cdots & e_{q-1}(\widetilde{\gamma}_1^p)
\end{array}
\right]^*
 \r) X_s
 \qquad \text{(see (\ref{eq:simp assumption lem statement},\ref{eq:exp-grad-general}))}  \nonumber\\
 & = \diag\l( 
\left [
\begin{array}{ccc}
e_{q-1}(\widetilde{\gamma}_1^1) & \cdots & e_{q-1}(\widetilde{\gamma}_1^p)
\end{array}
\right]^*
 \r)
 \qquad \text{(see Lemma \ref{lem:hessian calculated})} \nonumber\\
 & =: \diag(\widehat{\gamma}_1), 
 \label{eq:nabla-general-2}
\end{align}
where $\widetilde{\gamma}_1^i\in \RN^{p-1}$ is formed from $\widetilde{\gamma}_1$ by removing its $i$th entry, see \eqref{eq:defn-Gamma}. By substituting (\ref{eq:nabla-general-1},\ref{eq:nabla-general-2}) back into \eqref{eq:st cnd in general updated}}, we conclude that as in the proof of Theorem \ref{det optimality} that $X_s\in \operatorname{St}(n,p)$ such that $X_s^* \Gamma X_s \in \operatorname{GL}(p)$ is a stationary point of Program~(\ref{eq:general program}) if and only if 
\begin{equation}
\range( X_s)=\range(\Gamma X_s),
\label{eq:st cnd ranges new}
\end{equation} 
which is identical to \eqref{eq:st cnd ranges} in the proof of Theorem \ref{det optimality}, and consequently Lemmas \ref{lem:stationary} and \ref{lem:hessian calculated} therein apply here too. 
\edit{Moreover, note that 
\begin{align}
s_q(X_s^* X_s) & = s_q(I_p) \qquad (X_s \in \operatorname{St}(n,p) ) \nonumber\\
& = e_q(1_p) = {p \choose q}, \qquad \text{(see \eqref{eq:def of eq})}
\label{eq:used-in-nabla-1}
\end{align}
\begin{align}
s_q(X_s^* \Gamma X_s) & = s_q(\diag(\widetilde{\gamma}_1)) 
\qquad \text{(see Lemma \ref{lem:hessian calculated})}
\nonumber\\
& = e_q(\widetilde{\gamma}_1) . 
\qquad \text{(see \eqref{eq:def of eq})} 
\label{eq:used-in-nabla-2}
\end{align}
We can also revisit \eqref{eq:exp-for-lambda-s-thm2} to obtain that 
\begin{align}
\Lambda_s & = -\frac{1}{2} X_s^* \nabla f_{s_q}(X_s) 
\qquad \text{(see \eqref{eq:exp-for-lambda-s-thm2})}
\nonumber\\
& = - \frac{\diag(\widetilde{\gamma}_1) \diag(\widehat{\gamma}_1) }{{p \choose q}}
 + \frac{ e_q(\widetilde{\gamma}_1) {p-1 \choose q-1} I_p}{{p \choose q}^2} 
\qquad \text{(see (\ref{eq:nabla-general-1},\ref{eq:nabla-general-2},\ref{eq:used-in-nabla-1},\ref{eq:used-in-nabla-2}))} 
 \nonumber\\
 & = -\frac{{p-1 \choose q-1}}{{p \choose q }} \l(  \frac{\diag(\widetilde{\gamma}_1 \widehat{\gamma}_1 )}{{p-1 \choose q-1}} - \frac{e_q(\widetilde{\gamma}_1) I_p}{{p \choose q}}  \r). 
	 \label{eq:exp-for-lambda-final}
\end{align}}In particular, when $\phi = s_q$ \edit{(and consequently $\psi = e_q$)}, we next simplify the expression for Hessian in \eqref{eq:hessian lemma statement} by noting that 
\begin{equation*}
\phi(\I_p) = s_q(\I_p) = e_q(1_p) = {p \choose q}, 
\qquad \text{(see \eqref{eq:def of eq})}
\end{equation*}
\begin{equation*}
\partial_j \psi(\widetilde{\gamma}_1) = 
\partial_j e_q(\widetilde{\gamma}_1) = 
e_{q-1}(\widetilde{\gamma}_1^j),
\qquad j\in [p],
\end{equation*}
where $\widetilde{\gamma}_1^j\in\RN^{p-1}$ is formed from $\widetilde{\gamma}_1\in\RN^p$ by removing its $j$th entry, \edit{see \eqref{eq:def of gamma tilde}.} 
Moreover,
\begin{equation*}
\partial_1 \psi (1_p) = \partial_1 e_q(1_p) = e_{q-1}(1_{p-1}) = {p-1 \choose q-1},
\qquad \edit{\text{(see \eqref{eq:def of eq})}}
\end{equation*}
\begin{align}
f_{s_q}(X_s) 
& = 
 \frac{s_q(X_s^*\Gamma X_s)}{s_q(X_s^*X_s)} 
\qquad \mbox{(see \eqref{eq:def of g})} 
 \nonumber\\
& = \frac{s_q(\diag(\widetilde{\gamma}_1))}{s_q(\I_p)}
\qquad \l(\mbox{\eqref{eq:simp assumption lem statement} and } X_s\in\Stiefel(n,p) \r) \nonumber\\
& = \frac{e_q(\widetilde{\gamma}_1)}{e_q(1_p)} \nonumber\\
& =  \frac{e_q(\widetilde{\gamma}_1)}{{p \choose q}},
\qquad\edit{\text{(see \eqref{eq:def of eq})}}
\label{eq:value of g at Xs general} 
\end{align}
\begin{align}
\nabla^2 f_{s_q}(X_s)[\Delta,\Delta]
& =  \frac{1}{\phi(\I_p)} \sum_{i\in K^C}  \sum_{j=1}^p \l( \sigma_i^2 \partial_j e_q(\widetilde{\gamma}_1) - f_{s_q}(X_s) \partial_1 e_q(1_p) \r)  \Delta_{i,j}^2
\qquad \mbox{(see \eqref{eq:hessian lemma statement})}
\nonumber\\
& =  \frac{1}{{p\choose q}} \sum_{i\in K^C}  \sum_{j=1}^p \l( \sigma_i^2 e_{q-1}(\widetilde{\gamma}^j_1) - \frac{e_q(\widetilde{\gamma}_1) {p-1 \choose q-1}}{{p \choose q}} \r) \Delta_{i,j}^2 \nonumber\\
& = \frac{{p-1 \choose q-1}}{{p \choose q}} \sum_{i\in K^C}  \sum_{j=1}^p \l( \frac{\sigma_i^2 e_{q-1}(\widetilde{\gamma}^j_1)}{{p-1 \choose q-1}} - \frac{e_q(\widetilde{\gamma}_1) }{{p \choose q}} \r) \Delta_{i,j}^2.
\label{eq:hessian lemma statement general}
\end{align}
\edit{In light of (\ref{eq:general program-rewritten},\ref{eq:st cnd in general}), let us record for the future reference that $\Delta\in \RN^{n\times p}$ is an ascent direction at  $X_s$ if 
\begin{align}
\nabla^2 f_{s_q}(X_s)[\Delta,\Delta]+ \langle  \Delta^*\Delta, \Lambda_s \rangle \ge 0,
\label{eq:ascent-cnd}
\end{align}}
and 
\begin{align}
    X_s^\top \Delta+\Delta^\top X_s = 0.
    \label{eq:skewSymm}
\end{align}
%
By definition in \eqref{eq:def of gamma tilde},  $\{\sigma_{i}^2\}_{i\in J}$ are the distinct \edit{numbers appearing in} $\widetilde{\gamma}_1\in\RN^p$.
Suppose now that $\{\sigma_i\}_{i\in J}$ are \emph{not}  \edit{the unique numbers in the} $p$ leading singular values of $A$. Therefore there exist $i_0\notin K$ and $j_0\in [p]$  such that \edit{
\begin{align}
\sigma_{i_0}^2 >  \min_{i\in J} \sigma_i^2 = \min_{j\in[p]} \widetilde{\gamma}_{1,j} =: \widetilde{\gamma}_{1,j_0},
\qquad \edit{\text{(see \eqref{eq:def of gamma tilde})}}
\label{eq:not-p-leading-thm2}
\end{align}
and,\footnote{\edit{In \eqref{eq:not-p-leading-thm2}, the index $j_0$ might not be uniquely defined.}} moreover, 
\begin{align}
\frac{\sigma_{i_0}^2}{\widetilde{\gamma}_{1,j_0}} \ge \frac{\sigma_p^2}{\sigma_{p+1}^2}.
\label{eq:spec-gap-needed-thm2}
\end{align}}Let us set $\Delta\in\RN^{n\times p}$ such that $\Delta_{i_0,j_0}$ is its only nonzero entry and note that \eqref{eq:skewSymm} holds because $i_0\notin K$. For this choice of $\Delta$, \edit{we find that 
\begin{align}
& \nabla^2 f_{s_q}(X_s)[\Delta,\Delta]+\langle \Delta^* \Delta , \Lambda_s \rangle \nonumber\\
& = 
\frac{{p-1 \choose q-1}}{{p \choose q}} \l( \frac{\sigma_{i_0}^2 e_{q-1}(\widetilde{\gamma}_1^j)}{{p-1 \choose q-1}} - \frac{e_q(\widetilde{\gamma}_1) }{{p \choose q}}  \r) \Delta_{i_0,j_0}^2 + \Lambda_{s,j_0,j_0}^2 \Delta_{i_0,j_0}^2 
\qquad \text{(see \eqref{eq:hessian lemma statement general})}
\nonumber\\
& = \frac{{p-1 \choose q-1}}{{p \choose q}} \l( \frac{\sigma_{i_0}^2 e_{q-1}(\widetilde{\gamma}_1^{j_0})}{{p-1 \choose q-1}} - \frac{e_q(\widetilde{\gamma}_1)  }{{p \choose q}}  \r) \Delta_{i_0,j_0}^2 
 -\frac{{p-1 \choose q-1}}{{p \choose q }} \l(  \frac{\widetilde{\gamma}_{1,j_0} \widehat{\gamma}_{1,j_0} }{{p-1 \choose q-1}} - \frac{e_q(\widetilde{\gamma}_{1}) }{{p \choose q}}  \r) 
\Delta_{i_0,j_0}^2 
\qquad \text{(see \eqref{eq:exp-for-lambda-final})} \nonumber\\
& = \frac{ \sigma_{i_0}^2 e_{q-1}(\widetilde{\gamma}_1^{j_0})  - \widetilde{\gamma}_{1,j_0} \widehat{\gamma}_{1,j_0}  }{{p \choose q}} \Delta_{i_0,j_0}^2 \nonumber\\
& = \frac{(  \sigma_{i_0}^2   - \widetilde{\gamma}_{1,j_0}  ) e_{q-1}(\widetilde{\gamma}_1^j)  }{{p \choose q}} \Delta_{i_0,j_0}^2  
\qquad \text{(see \eqref{eq:nabla-general-2})} \nonumber\\
& >0.
\qquad \text{(see \eqref{eq:not-p-leading-thm2})}
\end{align}
That is, if $\{\sigma_i\}_{i\in J}$ are \emph{not} the unique members in the $p$ leading singular values of $A$, then there exists an ascent direction at $X_s$. 
The rest of the proof of Theorem \ref{general optimality} is now the same as that of Theorem \ref{det optimality}.
}

\section*{Acknowledgements}
RAH is supported by EPSRC grant EP/N510129/1. For this work, AE was supported by the Alan Turing Institute under the EPSRC grant EP/N510129/1 and also by the Turing Seed Funding grant SF019. AE would like to thank Stephen Becker and David Bortz for pointing out the connection to D-optimality in optimal design, and Mike Davis for the connection to independent component analysis. 

\appendix

\section{Proof of Lemma \ref{lem:stationary} \label{sec:proof of lemma stationary}}

Consider $X_{s}\in\Stiefel(n,p)$ such that $X_s^*\Gamma X_s \in \GL(p)$ and 
\begin{equation}
\range(X_s) = \range(\Gamma X_s).
\label{eq:st range repeated}
\end{equation}
Each row of $X_s$ naturally corresponds to a singular value of $A$, namely the $i$-th row corresponds to 
$\sigma_i$, where $\sigma_i^2$ is the $i$-th diagonal entry of $\Gamma$. 
Let $J\subset[n]$ be the index set such that $\Sigma_J :=\{\sigma_i\}_{i\in J}$ is  the set of \emph{distinct} singular values corresponding to the nonzero rows of $X_s$.\footnote{\edit{Throughout, we treat $\{\sigma_i\}_{i\in J}$ and similar items as sequences (rather than sets) to allow for repetitions. }}
For future reference, let us record  that $\Sigma_J$ contains only positive singular values, namely 
\begin{equation}
\Sigma_J \subset \RN_+. 
\label{eq:positive}
\end{equation}  
Indeed,  if $0\in \Sigma_J$, namely if $\sigma_n = 0$, then the rows of $X_s$ corresponding to $\sigma_n$ are zero too thanks to  \eqref{eq:st range repeated} and consequently $0\notin \Sigma_J$, which leads to a contradiction.  
Let us now set  
\begin{equation}
\text{dim}\left(\sigma_{i}\right):=\max\left[p-\sum_{j\in J,\,j\ne i}n_{j},1\right],\qquad i\in J,\label{eq:dim}
\end{equation}
for short, where $n_i$ is the multiplicity of $\sigma_i$. 
Fix $i_{0}\in J$. Consider $\mathcal{K}_{i_{0}}$, the collection
of all index sets $K\subset[n]$ of size $p$ such that 
\begin{equation}
\sigma_{i_{0}}\in\mbox{unique}\l( \{\sigma_{i}\}_{i\in K} \r)\subseteq\Sigma_{J},
\end{equation}
where $\mbox{unique}(\{\sigma_{i}\}_{i\in K})$ returns the distinct members of the set $\{\sigma_{i}\}_{i\in K}$. In words, every index set $K\in \mathcal{K}_{i_0}$ contains $\sigma_{i_0}$ and $p-1$ other (not necessarily distinct) singular values of $A$ corresponding to nonzero rows of $X_s$. Consider
an arbitrary $K\in\mathcal{K}_{i_{0}}$. It follows from  (\ref{eq:dim}) that  
\begin{equation}
\{\sigma_{i}\}_{i\in K}  \text{ contains at least } \text{dim}(\sigma_{i_0}) \text{ copies of } \sigma_{i_{0}}.
\label{eq:no of copies in I}
\end{equation}
On the other hand, \eqref{eq:st range repeated} implies that there exists $B\in\mathbb{R}^{p\times p}$ such that 
\begin{equation}
X_s  B = \Gamma X_s. 
\label{eq:st cnd repeated}
\end{equation} 
By multiplying both sides above by $X_s^*$ and using the fact that $X_s\in\Stiefel(n,p)$, we infer from \eqref{eq:st cnd repeated} that 
\begin{equation}
B=X_s^*\Gamma X_s \in \GL(p),
\label{eq:B full rank}
\end{equation}
where the invertibility of $B$ follows from the assumption of Lemma \ref{lem:stationary}.  In addition, \eqref{eq:st cnd repeated} means that each row of $X_s$ is an eigenvector of $B$. 
By restricting (\ref{eq:st cnd repeated}) to the index set $K$, we
find that 
\begin{equation}
 X_{s}[K,:] \cdot B =\Gamma[K,K]\cdot X_{s}[K,:]
 \text{ is the eigen-decomposition of } B,
 \label{eq:rest is eig decomp}
\end{equation}
because $K\in \mathcal{K}_{i_0}$ is a set of size $p$ by the definition of $\mathcal{K}_{i_0}$ earlier. Above, we used MATLAB's matrix notation. For example, $X_s[K,:]\in \RN^{p\times p}$ above is the row-submatrix  of $X_s$ corresponding to the rows indexed by $K$. 
It follows from \eqref{eq:rest is eig decomp} that 
\begin{equation}
\sigma_{i_{0}} \text{ is an eigenvalue of } B \text{ with the multiplicity
of at least } \text{dim}(\sigma_{i_{0}}),
\end{equation}
because, by \eqref{eq:no of copies in I}, $\{\sigma_i\}_{i\in K}$ contains at least $\mbox{dim}(\sigma_{i_0})$ copies of $\sigma_{i_0}$. In fact, 
 there exists an index set $K_{0}\in\mathcal{K}_{i_{0}}$  such that  $\{\sigma_{i}\}_{i\in K_{0}}$  contains  {\emph{exactly }} $\text{dim}(\sigma_{i_{0}})$ { copies of } $\sigma_{i_{0}}$.\footnote{Indeed, if $\text{dim}(\sigma_{i_0})>1$, such an index set $K_0$ would include $n_{i}$ copies of singular value $\sigma_i$, for every  $\sigma_i \ne \sigma_{i_0}$ with $i\in K$. The construction is similar if $\text{dim}(\sigma_{i_0})=1$. 
}
It follows that 
\begin{equation}
\sigma_{i_{0}} \text{ is an eigenvalue of } B \text{ with the multiplicity
of exactly } \text{dim}(\sigma_{i_{0}}).
\label{eq:eval exact mult}
\end{equation}
By \eqref{eq:B full rank}, $B$ is full-rank and it follows from \eqref{eq:eval exact mult} that the corresponding eigenvectors of $B$ span a $\text{dim}(\sigma_{i_{0}})$-dimensional subspace of $\mathbb{R}^{p}$,  namely the geometric multiplicity of $\sigma_{i_0}$ is $\mbox{dim}(\sigma_{i_0})$.  
Since every row of $X_{s}$ is an eigenvectors of $B$ by \eqref{eq:rest is eig decomp}, it follows that the 
\begin{equation}
\text{rows of } X_{s} \text{ that correspond to }\sigma_{i_{0}}
 \text{ span a }\text{dim}(\sigma_{i_{0}})\text{-dimensional subspace of }\RN^p. 
\end{equation}
Since the choice of ${i_0}\in J$ was arbitrary above, we find for every $i\in J$ that 
\begin{align}
\text{the rows of }X_{s} \text{ corresponding to }\sigma_{i} \text{ span a }\text{dim}(\sigma_{i})\text{-dimensional}\nonumber\\
\text{subspace of }\mathbb{R}^{p} \text{, denoted by }S_i\in \Grassman(p,\mbox{dim}(\sigma_i)).
\label{eq:def of Si}
\end{align}
Because $B$ is symmetric by its definition in \eqref{eq:B full rank}, these subspaces are  orthogonal  to one another, namely 
\begin{equation}
S_i \perp S_{j}, \qquad i\ne j \mbox{ and } i,j\in J. 
\label{eq:orth of Sj} 
\end{equation}
On the other hand, note that 
\begin{align*}
B & =X_{s}^{*}\Sigma X_{s}
\qquad \mbox{(see \eqref{eq:st cnd repeated})}
\\
 & =\sum_{i\in J}\sigma_{i}\left(\sum_{\sigma_{j}=\sigma_{i}}X_{s}\left[j,:\right]^{*}X_{s}[j,:]\right)+\sum_{\sigma_{j}\notin\Sigma_{J}}\sigma_{j}\cdot X_{s}[j,:]^{*}X_{s}[j,:]\\
 & =:\sum_{i\in J}\sigma_{i}Y_{i}+\sum_{\sigma_{j}\notin\Sigma_{J}}\sigma_{j}\cdot X_{s}[j,:]^{*}X_{s}[j,:]\\
 & =\sum_{i\in J}\sigma_{i}Y_{i},
\end{align*}
where the last line above follows from the definition of $J$, namely any singular value $\sigma_i\notin \Sigma_J$ corresponds to a zero row of $X_s$. For every $i\in J$, note that 
\begin{equation}
\range(Y_{i}) =  S_i
\label{eq:range of Yi}
\end{equation}
by definition of $S_i$ in \eqref{eq:def of Si}. It therefore follows from \eqref{eq:orth of Sj} that $\{Y_{i}\}_{i\in J}$
are pairwise orthogonal matrices, namely 
\begin{equation}
Y_{i}^{*}Y_{j}=0,\qquad i\ne j \mbox{ and } i,j\in J.\label{eq:obs}
\end{equation}
Therefore,  $B=\sum_{i\in J}\sigma_{i}Y_{i}$ is the eigen-decomposition
of $B$ and, because $B$ is full-rank by \eqref{eq:B full rank}, we find that 
\begin{align}
p
& = \sum_{i\in J,\, \sigma_i \ne 0} \mbox{dim}(\text{span}(Y_i)) \nonumber\\
& = \sum_{i\in J,\, \sigma_i \ne 0} \mbox{dim}(S_i) \qquad \text{(see \eqref{eq:range of Yi})}\nonumber\\
& =\sum_{i\in J,\,\sigma_{i}\ne0}\text{dim}(\sigma_{i}) 
\qquad \text{(see \eqref{eq:def of Si})}
\nonumber\\
& = \sum_{i\in J} \mbox{dim}(\sigma_i).
\qquad \text{(see \eqref{eq:positive})}
\label{eq:sum to p}
\end{align}
This completes
the proof of Lemma \ref{lem:stationary}. 

\section{Proof of Lemma \ref{lem:hessian calculated} \label{sec:proof of stability lemma}}

Suppose that $X_{s}\in\Stiefel(n,p)$ satisfies 
\begin{equation}
\range(X_s)=\range(\Gamma X_s),
\label{eq:st cnd repeated again}
\end{equation}
which implies that 
\begin{equation}
X_s B = \Gamma X_s, 
\qquad \mbox{where } B = X^*_s \Gamma X_s. 
\label{eq:inv subspace}
\end{equation}
 Let $J\subseteq[n]$ be the corresponding index set prescribed in Lemma \ref{lem:stationary}, and recall that $\{\sigma_i\}_{i\in J}$ are distinct by Item 1 in Lemma \ref{lem:stationary}. Consider an index set $K\supseteq J$ such that $\{\sigma_{i}\}_{i\in K}$
contains all available copies of the singular values listed in $\{\sigma_{i}\}_{i\in J}$. Let $k $ denote the size of $K$. 
For convenience, we define
\[
X_{s,1}:=X_{s}\left[K,:\right]\in\mathbb{R}^{k \times p},\qquad X_{s,2}:=X_{s}\left[K^C,:\right]\in\mathbb{R}^{(n-k)\times p},
\]
\begin{equation}
\Gamma_{1}:=\Gamma\left[K,K\right]\in\mathbb{R}^{k\times k},\qquad\Gamma_{2}:=\Gamma\left[K^C,K^C\right]\in\mathbb{R}^{(n-k)\times(n-k)},
\label{eq:def of Gammas}
\end{equation}
where we used MATLAB's matrix notation above. For example, $X_s[K,:]$ is the restriction of $X_s$ to the rows indexed in $K$. Also, $K^C$ is the complement of index set $K$ with respect to $[n]$. In particular, Item 2 in Lemma~\ref{lem:stationary} immediately implies that 
 \begin{equation}
X_{s,2}=0, 
\label{eq:Xs2 is zero}
\end{equation}
and, consequently,
\begin{align}
X_{s,1}^{*}X_{s,1}& = X_{s,1}^* X_{s,1}+X_{s,2}^* X_{s,2} \nonumber\\
& =X_{s}^{*}X_{s} \nonumber\\
& =\I_{p}.
\qquad \l( X_s\in \Stiefel(n,p)\r)
\end{align}
That is,
\begin{equation}
X_{s,1}\in\Stiefel(k,p).
\label{eq:X1 is Stiefel}
\end{equation} 
Note that \eqref{eq:st cnd repeated again} holds also after a change of basis from $X_s$ to $X_s \Theta$ for invertible $\Theta\in\GL(p)$. 
Therefore, thanks to \eqref{eq:st cnd repeated again} and Item 4 in Lemma \ref{lem:stationary}, we can assume without loss of generality that
the supports of rows and also columns of $X_{s,1}$  are disjoint. More specifically, with the enumeration $J=\{i_1,i_2,\cdots\}$, we assume without loss of generality that 
\begin{equation}
X_{s,1}  = \l[
\begin{array}{ccc}
X_{s,1,1} & 0 & \cdots \\
0 & X_{s,1,2} & \cdots \\
\vdots & \vdots & \ddots 
\end{array}
\r] \in\RN^{k\times p},
\label{eq:blocks of Xs1}
\end{equation}
where the rows of \edit{the block} $X_{s,1,1}$ corresponds to the singular value $\sigma_{i_1}$ and has $\dim(\sigma_{i_1})$ columns, \edit{the block} $X_{s,1,2}$ corresponds to $\sigma_{i_2}$ and so on. In particular, \eqref{eq:X1 is Stiefel} implies that 
\begin{equation}
X_{s,1,1}^* X_{s,1,1} = \I_{\dim(\sigma_{i_1})},
\qquad 
X_{s,1,2}^* X_{s,1,2} = \I_{\dim(\sigma_{i_2})},
\qquad \cdots
\label{eq:each block orth}
\end{equation} 
namely,  $X_{s,1,1}$ has orthonormal columns,  so do $X_{s,1,2}$ and the rest of \edit{the diagonal} blocks of $X_{s,1}$. Another necessary ingredient in our analysis below is the observation that 
\begin{align}
X_{s}^{*}\Gamma X_{s}& = X_{s,1}^{*}\Gamma_{1}X_{s,1}+X_{s,2}^{*}\Gamma_{2}X_{s,2} \nonumber\\
& = X_{s,1}^{*}\Gamma_{1}X_{s,1}
\qquad \mbox{(see \eqref{eq:Xs2 is zero})} \nonumber\\
& = \l[ 
\begin{array}{ccc}
\sigma_{i_1}^2 \cdot X_{s,1,1}^* X_{s,1,1} & 0 & \cdots\\
0 & \sigma_{i_2}^2 \cdot X_{s,1,2}^* X_{s,1,2} & \cdots \\
\vdots & \vdots & \ddots 
\end{array}
 \r] 
  \nonumber\\
& = \l[ 
\begin{array}{ccc}
\sigma_{i_1}^2 \cdot \I_{\dim(\sigma_{i_1})} & 0 & \cdots\\
0 & \sigma_{i_2}^2 \cdot \I_{\dim(\sigma_{i_1})} & \cdots \\
\vdots & \vdots & \ddots 
\end{array}
\r]
\qquad \text{(see \eqref{eq:each block orth})}
 \nonumber\\
& =:\widetilde{\Gamma}_{1}\in\mathbb{R}^{p\times p} \nonumber\\
& =: \diag(\widetilde{\gamma}_1),
\label{eq:needed later}
\end{align}
namely, the diagonal matrix $\widetilde{\Gamma}_{1}$ contains $\dim(\sigma_{i_1})$ copies of $\sigma_{i_1}^2$, $\dim(\sigma_{i_2})$ copies of $\sigma_{i_2}^2$, and so on. 
To compute the Hessian of $f_\phi$, we make a small perturbation to its argument. 
To be specific, consider $\Delta\in \RN^{n\times p}$ that is supported only on the rows indexed by $K^C$ and let 
\begin{equation}
\Delta_{2}:=\Delta\left[K^C,:\right]\in\mathbb{R}^{(n-k)\times p}
\label{eq:def of Delta2}
\end{equation}
be the nonzero block of $\Delta$. Note in particular that 
\begin{equation}
X_s^* \Delta = 0,
\label{eq:orth of X n Delta}
\end{equation}
because by construction  $X_s$ and $\Delta$ are supported on the rows indexed by $K$ and $K^C$, respectively, see \eqref{eq:Xs2 is zero}. 
Let $h_\Gamma(X) = \phi(X^*\Gamma X)$ for short and note that 
\begin{align}
h_\Gamma(X_s+\Delta ) & = \phi((X_s+\Delta)^*\Gamma ( X_s+\Delta) ) \nonumber\\
& = \phi( X_s^* \Gamma X_s + X_s^*\Gamma \Delta + \Delta^* \Gamma X_s + \Delta^* \Gamma \Delta)  \nonumber\\
& = \phi( X_s^* \Gamma X_s + B X_s^* \Delta + \Delta^* X_s B + \Delta^* \Gamma \Delta ) 
\qquad \mbox{(see \eqref{eq:inv subspace})}
\nonumber\\
& = \phi (X_s^* \Gamma X_s + \Delta^* \Gamma \Delta)
\qquad \mbox{(see \eqref{eq:orth of X n Delta})}  \nonumber\\
& = \phi ( \widetilde{\Gamma}_1+\Delta_2^*\Gamma_2 \Delta_2 )
\qquad \mbox{(see (\ref{eq:needed later},\ref{eq:def of Delta2},\ref{eq:def of Gammas}))} \nonumber\\
& = \phi(\widetilde{\Gamma}_1) + \langle \nabla \phi(\widetilde{\Gamma}_1),\Delta_2^* \Gamma_2 \Delta_2\rangle + O(\|\Delta_2\|^3)
\qquad \mbox{(Taylor expansion)}
 \nonumber\\
& = \phi(X_s^*\Gamma X_s) + \langle \nabla \phi(\widetilde{\Gamma}_1),\Delta_2^* \Gamma_2 \Delta_2\rangle + O(\|\Delta\|^3),
\qquad \mbox{(see (\ref{eq:needed later},\ref{eq:def of Delta2}))}
\end{align}
where we used the standard Big-$O$ notation above. 
Recall from \eqref{eq:needed later} that $\widetilde{\Gamma}_1 = \diag(\widetilde{\gamma}_1)$. Because $\phi$ is by assumption a spectral function with the corresponding symmetric function $\psi$,  \eqref{eq:grad of spec} implies that 
\begin{equation}
\nabla \phi(\widetilde{\Gamma}_1) = \diag(\nabla \psi(\widetilde{\gamma}_1)) = \diag\l( 
\l[
\begin{array}{ccc}
\partial_1 \psi(\widetilde{\gamma}_{1,1}) & \cdots \partial_p \psi(\widetilde{\gamma}_{1,p})
\end{array}
\r]^*
 \r),
\end{equation}
which allows us to rewrite the last line above as 
\begin{align}
h_\Gamma(X_s+\Delta)  & = \phi(X_s^*\Gamma X_s) +\langle \diag (\nabla \psi( \widetilde{\gamma}_1)),\Delta_2^* \Gamma_2 \Delta_2\rangle +O(\|\Delta\|^3) \nonumber\\
& =  \phi(X_s^*\Gamma X_s) +
 \sum_{i\in K^C}  \sum_{j=1}^p \sigma_i^2 \cdot \partial_j \psi(\widetilde{\gamma}_1) \cdot \Delta[i,j]^2 + O(\|\Delta\|^3),
 \label{eq:expansion of h gamma}
\end{align}
where $\Delta[i,j]$ is the $[i,j]$th entry of $\Delta$. 
Let $1_p\in\RN^p$ be the vector of all ones. After setting $h_{\I}(X) = \phi(X^*X)$ and after replacing $\Gamma$ with $\I_n$ above, we find that 
\begin{align}
h_{\I}(X_s+\Delta) & = \phi(X_s^* X_s) +
 \sum_{i\in K^C}  \sum_{j=1}^p \partial_j \psi(1_p) \cdot \Delta[i,j]^2 + O(\|\Delta\|^3) \nonumber\\
 & = \phi(\I_p) +
 \partial_1 \psi(1_p)  \sum_{i\in K^C}  \sum_{j=1}^p  \Delta[i,j]^2 + O(\|\Delta\|^3), 
 \label{eq:expansion of h I}
\end{align}
where in the last line above we used the fact thta $X_s\in \Stiefel(n,p)$ and that  $\psi$ is a symmetric function, hence $\partial_j \psi(1_p) = \partial_1 \psi(1_p)$ for every $j\in [p]$. Since $f_\phi = h_\Gamma/h_{\I}$ by definition, (\ref{eq:expansion of h gamma},\ref{eq:expansion of h I}) imply that 
\begin{align}
f_\phi(X_s+\Delta) & = \frac{h_\Gamma(X_s+\Delta)}{h_{\I}(X_s+\Delta)} \nonumber\\
& = \frac{\phi(X_s^*\Gamma X_s) +
 \sum_{i\in K^C}  \sum_{j=1}^p \sigma_i^2 \cdot \partial_j \psi(\widetilde{\gamma}_1) \cdot \Delta[i,j]^2 + O(\|\Delta\|^3)}{\phi(\I_p) +
\partial_1 \psi(1_p) \sum_{i\in K^C}  \sum_{j=1}^p \Delta_{i,j}^2 + O(\|\Delta\|^3)} 
 \qquad \mbox{(see (\ref{eq:expansion of h gamma},\ref{eq:expansion of h I}))} \nonumber\\
 & = \l( \phi(X_s^*\Gamma X_s) +
 \sum_{i\in K^C}  \sum_{j=1}^p \sigma_i^2 \cdot \partial_j \psi(\widetilde{\gamma}_1) \cdot \Delta[i,j]^2 + O(\|\Delta\|^3) \r) \nonumber\\
 & \qquad \cdot \frac{1}{\phi(\I_p)}  \l(1 - \frac{\partial_1 \psi(1_p)}{\phi(\I_p)}  \sum_{i\in K^C}  \sum_{j=1}^p  \Delta[i,j]^2 + O(\|\Delta\|^3  \r) 
\qquad \l( \frac{1}{1+a}=1-a+O(a^2) \r) 
 \nonumber\\
 & =  \frac{\phi(X_s^*\Gamma X_s)}{\phi(\I_p)} - \frac{\phi(X_s^* \Gamma X_s)}{\phi(\I_p)} \cdot \frac{\partial_1 \psi(1_p)}{\phi(\I_p)} \sum_{i\in K^C} \sum_{j=1}^p \Delta[i,j]^2 \nonumber\\
 & \qquad  + \frac{1}{\phi(\I_p)} \sum_{i\in K^C}  \sum_{j=1}^p \sigma_i^2 \cdot \partial_j \psi(\widetilde{\gamma}_1) \cdot \Delta[i,j]^2  + O(\|\Delta\|^3) \nonumber\\ 
 & = f_\phi(X_s) - f_\phi(X_s) \frac{\partial_1 \psi(1_p)}{\phi(\I_p)} \sum_{i\in K^C} \sum_{j=1}^p \Delta[i,j]^2  
 \nonumber\\
& \qquad + \frac{1}{\phi(\I_p)} \sum_{i\in K^C}  \sum_{j=1}^p \sigma_i^2 \cdot \partial_j \psi(\widetilde{\gamma}_1) \cdot \Delta[i,j]^2+ O(\|\Delta\|^3),
\qquad \text{(see \eqref{eq:def of g})}
\end{align}
and, consequently,
\begin{equation}
\nabla^2 f_\phi(X_s)[\Delta,\Delta] =  \frac{1}{\phi(\I_p)} \sum_{i\in K^C}  \sum_{j=1}^p \l( \sigma_i^2 \partial_j \psi(\widetilde{\gamma}_1) - f_\phi(X_s) \partial_1 \psi(1_p) \r)  \Delta_{i,j}^2,
\end{equation}
for our particular choice of $\Delta$ that satisfies $\Delta[K,:]=0$. Here, the bilinear operator $\nabla^2 f_\phi(X_s) : \RN^{n\times p} \times \RN^{n\times p} \rightarrow \RN$ is the Hessian of $f_\phi$ at $X_s$. This completes the proof of Lemma \ref{lem:hessian calculated}.

\bibliographystyle{unsrt}
\bibliography{det}

\end{document}